\theoremstyle{plain}
\newtheorem{lemma}{Lemma}[section] 
\newtheorem{theorem}[lemma]{Theorem}
\newtheorem{corollary}[lemma]{Corollary}
\newtheorem{proposition}[lemma]{Proposition}
\theoremstyle{definition}
\newtheorem{remark}[lemma]{Remark}
\newtheorem{example}[lemma]{Example}
\newcommand{\Zset}{\mathbb Z}
\newcommand{\Cset}{\mathbb C}
\newcommand{\ol}{\overline}
\newcommand{\so}{\mathbf{s}}
\newcommand{\ra}{\mathbf{r}}
\newcommand{\ann}{\operatorname{ann}}
\title[Annihilator ideals of graph algebras]{Annihilator ideals of graph algebras}
\author{Lia Va\v s}
\address{Department of Mathematics, Saint Joseph's University, Philadelphia, PA 19131, USA}
\email{lvas@sju.edu}
\subjclass{16S88, 16D25, 16D80, 46L55} 
\keywords{Annihilator ideal, Leavitt path algebra, graph $C^*$-algebra, graded, gauge-invariant, lattice}
\begin{document}

\begin{abstract}
If $I$ is a (two-sided) ideal of a ring $R$, we let $\operatorname{ann}_l(I)=\{r\in R\mid rI=0\},$ $\operatorname{ann}_r(I)=\{r\in R\mid Ir=0\},$ and $\operatorname{ann}(I)=\operatorname{ann}_l(I)\cap \operatorname{ann}_r(I)$ be the left, the right and the double annihilators. An ideal $I$ is said to be an annihilator ideal if $I=\operatorname{ann}(J)$ for some ideal $J$ (equivalently, $\operatorname{ann}(\operatorname{ann}(I))=I$). We study annihilator ideals of Leavitt path algebras and graph $C^*$-algebras.  

Let $L_K(E)$ be the Leavitt path algebra of a graph $E$ over a field $K.$ If $I$ is an ideal of $L_K(E),$ it has recently been shown that $\operatorname{ann}(I)$ is a graded ideal (with respect to the natural grading of $L_K(E)$ by $\mathbb Z$). We note that $\operatorname{ann}_l(I)$ and $\operatorname{ann}_r(I)$ are also graded. If $I$ is graded, we show that $\operatorname{ann}_l(I)=\operatorname{ann}_r(I)=\operatorname{ann}(I)$ and describe $\operatorname{ann}(I)$ in terms of the properties of a pair of sets of vertices of $E,$ known as an admissible pair, which naturally corresponds to $I.$ Using such a description, we present properties of $E$ which are equivalent with the requirement that each graded ideal of $L_K(E)$ is an annihilator ideal. We show that the same properties of $E$ are also equivalent with each of the following conditions: (1) The lattice of graded ideals of $L_K(E)$ is a Boolean algebra; (2) Each closed gauge-invariant ideal of $C^*(E)$ is an annihilator ideal; (3) The lattice of closed gauge-invariant ideals of $C^*(E)$ is a Boolean algebra. In addition, we present properties of $E$ which are equivalent with each of the following conditions: (1) Each ideal of $L_K(E)$ is an annihilator ideal; (2) The lattice of ideals of $L_K(E)$ is a Boolean algebra; (3) Each closed ideal of $C^*(E)$ is an annihilator ideal; (4) The lattice of closed ideals of $C^*(E)$ is a Boolean algebra.
\end{abstract}

\maketitle

\section{Introduction}

If $R$ is a ring (associative but not necessarily unital) and $M$ is a left $R$-module, then $\ann_l(M)=\{r\in R\mid rm=0$ for all $m\in M\}$ is a two-sided ideal of $R$ called the {\em left annihilator} of $M.$ Similarly, if $N$ is a right $R$-module, then the ideal $\ann_r(N)=\{r\in R\mid nr=0$ for all $n\in N\}$ is called the {\em right annihilator} of $N.$ If $B$ is both a left and a right $R$-module, $\ann(B)=\{r\in R\mid rb=br=0\mbox{ for any }b\in B\}$ is the {\em annihilator} of $B.$   

Taking the double annihilator of a (two-sided) ideal is a closure operator on the set of ideals of a ring $R$ in the sense that the following three properties hold for any two ideals $I$ and $J$ of $R$. 
\begin{enumerate}[\upshape(1)]
\item $\ann\ann$ is extensive: $I\subseteq \ann(\ann(I)).$
\item $\ann\ann$ is monotone: $I\subseteq J$ implies  $\ann(\ann(I))\subseteq \ann(\ann(J)).$
\item $\ann\ann$ is idempotent: $\ann(\ann(\ann(I)))=\ann(I).$
\end{enumerate} 

The ideals which are closed under this closure operator have often been called {\em annihilator ideals}. This name reflects the fact that $I$ is an annihilator ideal if and only if $I=\ann(J)$ for some ideal $J.$ Annihilator ideals of a graph $C^*$-algebra have recently been studied in \cite{Brown_et_al} and annihilator ideals of a Leavitt path algebra have recently been studied in \cite{Goncalves_Royer_regular_ideals} and \cite{Malazani}. In all three papers mentioned, annihilator ideals have been referred to as {\em regular} ideals. Since the term ``regular'' has multiple other uses both in ring theory and in operator theory, we opt to use the name ``annihilator ideal'' instead of ``regular ideal''. 

By \cite[Theorem 3.3]{Goncalves_Royer_regular_ideals},
if $E$ is a graph and $K$ a field, then the annihilator of any ideal of the Leavitt path algebra $L_K(E)$ is graded with respect to the natural grading of $L_K(E)$ by the group of integers. In fact, the proof of this result shows that the left and the right annihilators of an ideal are also graded (Proposition \ref{annihilators_are_graded}). As a corollary, the left and the right annihilators of a {\em graded} ideal $I$ of $L_K(E)$ are equal and, hence, equal to $\ann(I)$ (Corollary \ref{corollary_on_l_r_ann}).

Each graded ideal $I$ of a Leavitt path algebra  is uniquely determined by a pair $(H, S)$ of subsets of vertices, known as an admissible pair (we review the definition in section \ref{section_prerequisites}) and one writes $I=I(H,S)$ in this case. For a graph $C^*$-algebra, the role of graded ideals is taken over by the closed gauge-invariant ideals. 
If $E$ is row-finite (i.e. has no vertices emitting infinitely many edges), an admissible pair has the form $(H,\emptyset)$ and we write it shorter as $H.$ If $E$ is row-finite, \cite[Proposition 3.4]{Brown_et_al} describes the set of vertices $H^\bot$ such that $\ann(I(H))=I(H^\bot)$ for a closed gauge-invariant ideal $I(H)$ of $C^*(E)$ and \cite[Proposition 4.2]{Malazani} has the algebraic analogue of this result for a graded ideal $I(H)$ of $L_K(E).$ For an admissible pair $(H,S)$ of an arbitrary graph $E,$ we introduce the admissible pair $(H^\bot, S^\bot)$ and show that 
\[\ann(I(H,S))=I(H^\bot, S^\bot)\] 
holds both in $L_K(E)$ (Proposition \ref{annihilator_of_graded_ideal}) and in $C^*(E)$ (Corollary \ref{annihilator_of_gauge_inv_ideal}). Thus, we can define an operator $^{\bot\bot}$ on the set of admissible pairs of the graph which is a closure operator: it is extensive, monotone, and idempotent (Proposition \ref{closure_on_admissible}). 
We say that an admissible pair is {\em reflexive} if it is closed under this operator. 

Searching for the properties of $E$ which are equivalent with the condition that each graded ideal of $L_K(E)$ is an annihilator ideal, we arrive to the following properties: (a) each cycle is either without exits or extreme (informally, a cycle is extreme if it has exits and each exit ``returns'' to the cycle, section \ref{section_prerequisites} contains a precise definition), (b) each infinite emitter is on a cycle, and (c) each infinite path has only finitely many bifurcations with ranges that do not connect back to some vertex of the path. 
Theorem \ref{theorem_char_of_reflexive} shows the equivalence of the conditions (a), (b), (c) with the requirement that each admissible pair is reflexive. Because of this,  we say that the graph $E$ is  {\em all-reflexive} if the conditions (a), (b), and (c) hold. As corollary of Theorem \ref{theorem_char_of_reflexive}, we show that 
the lattice of graded ideals of $L_K(E)$ is a Boolean algebra if and only if $E$ is all-reflexive. 

If $E$ is all-reflexive and without cycles with no exits, we say that $E$ is {\em strongly all-reflexive}. This condition is equivalent with the requirement that each ideal of $L_K(E)$ is an  annihilator ideal as well as with the requirement that the lattice of all ideals is a Boolean algebra (Theorem \ref{theorem_Boolean}). 

If $(H,S)$ is a reflexive admissible pair of $E,$ then $E$ is (strongly) all-reflexive if and only if the quotient graph $E/(H,S)$ and the porcupine graph $P_{(H,S)}$ are (strongly) all-reflexive (Proposition \ref{proposition_quotients}). 
Thus, if $I$ is an annihilator ideal of $L_K(E),$ then each (graded) ideal of $L_K(E)$ is an annihilator ideal if and only if the same holds for each (graded) ideal of $I$ and of $L_K(E)/I$ (Corollary \ref{corollary_quotients}).  

In section \ref{section_C_star}, we turn to the graph $C^*$-algebra $C^*(E)$ of a graph $E$. From our previous results, it follows relatively directly that the condition that $E$ is all-reflexive is equivalent with the condition that each closed gauge-invariant ideal of $C^*(E)$ is an annihilator ideal as well as with the requirement that the lattice of closed gauge-invariant ideals of $C^*(E)$ is a Boolean algebra (Corollary \ref{corollary_reflexive_c_star}). A bit more consideration is needed to show that the condition that $E$ is strongly all-reflexive is equivalent with the condition that each closed ideal of $C^*(E)$ is an annihilator ideal as well as with the requirement that the lattice of all closed ideals is a Boolean algebra (Theorem \ref{theorem_Boolean_c_star}). The graph $C^*$-algebra version of Corollary \ref{corollary_quotients} also holds: if $I$ is a closed gauge-invariant annihilator ideal of $C^*(E),$ then each closed (gauge-invariant) ideal of $C^*(E)$ is an annihilator ideal if and only if the same holds for each closed (gauge-invariant) ideal of $I$ and of $C^*(E)/I$ (Corollary \ref{corollary_quotients_c_star}).

\section{Prerequisites}
\label{section_prerequisites}

\subsection{Graded rings and  \texorpdfstring{$\ast$}{TEXT}-rings prerequisites}
A ring $R$ (not necessarily unital) is {\em graded} by a group $\Gamma$ if $R=\bigoplus_{\gamma\in\Gamma} R_\gamma$ for additive subgroups $R_\gamma$ and $R_\gamma R_\delta\subseteq R_{\gamma\delta}$ for all $\gamma,\delta\in\Gamma.$ The elements of the set $\bigcup_{\gamma\in\Gamma} R_\gamma$ are said to be {\em homogeneous}. A left ideal $I$ of a graded ring $R$ is {\em graded} if $I=\bigoplus_{\gamma\in \Gamma} I\cap R_\gamma.$ Graded right ideals and graded ideals are defined similarly. 

A ring $R$ is an involutive ring or a $*$-ring, if there is an anti-automorphism $*:R\to R$ of order two. If $R$ is also a $K$-algebra for some commutative, involutive ring $K$, then $R$ is a $*$-algebra if $(kx)^*=k^*x^*$ for all $k\in K$ and $x\in R.$ If $R$ is a $*$-ring, then each left $R$-module is a right module with $mr=r^*m$ and $\ann_l(M)^*=\ann_r(M).$ If $I$ is a left ideal, then $I^*$ is a right ideal and 
\[\ann_l(I)^*=\ann_r(I^*).\]

\subsection{Graphs and properties of vertex sets}
If $E$ is a directed graph, we let $E^0$ denote the set of vertices, $E^1$ denote the set of edges, and $\so$ and $\ra$ denote the source and the range maps of $E.$ A {\em sink} of $E$ is a vertex which emits no edges and an {\em infinite emitter} is a vertex which emits infinitely many edges. A vertex of $E$ is {\em regular} if it is not a sink nor an infinite emitter. The graph $E$ is {\em row-finite} if it has no infinite emitters and $E$ is {\em finite} if it has finitely many vertices and edges.  

A {\em path} is a single vertex or a sequence of edges $e_1e_2\ldots e_n$ for some positive integer $n$ such that $\ra(e_i)=\so(e_{i+1})$ for $i=1,\ldots, n-1.$  
The set of vertices on a path $p$ is denoted by $p^0.$  The functions $\so$ and $\ra$ extend to paths naturally. A path $p$ is {\em closed}  if $\so(p)=\ra(p).$ A closed path $p$ is {\em simple} if $\so(p)$ is not the source of any other edge of $p.$ A {\em cycle} is a closed path such that different edges in the path have different sources. A cycle has {\em an exit} if a vertex on the cycle emits an edge outside of the cycle. The graph $E$ has {\em Condition (L)} if each cycle has an exit. The graph $E$ has {\em Condition (K)} if each vertex $v$ which is a source of a closed simple path $p$ is a source of another closed simple path different than $p.$
A cycle $c$ is {\em extreme} if $c$ has exits and for every path $p$ with $\so(p)\in c^0,$ there is a path $q$ such that $\ra(p)=\so(q)$ and $\ra(q)\in c^0.$ Informally, we can think that this second condition states that ``every exit returns''. 

An {\em infinite path} is a sequence of edges $e_1e_2\ldots$ such that $\ra(e_n)=\so(e_{n+1})$ for $n=1,2\ldots.$ Just as for finite paths, we use $\alpha^0$ for the set of vertices of an infinite path $\alpha.$ An infinite path $\alpha$ is {\em strictly decreasing} if no two different vertices of $\alpha^0$ are on the same closed path.

If $u,v\in E^0$ are such that there is a path $p$ with $\so(p)=u$ and $\ra(p)=v$, we write $u\geq v.$ For $V\subseteq E^0,$
the set $T(V)=\{u\in E^0\mid v\geq u$ for some $v\in V\}$ is called the {\em tree} of $V.$ Following \cite{Lia_irreducible}, we use $R(V)$ to denote the set $\{u\in E^0\mid u\geq v$ for some $v\in V\}$ and we call it the {\em root} of $V.$
If $V=\{v\},$ we use $T(v)$ for $T(\{v\})$ and $R(v)$ for $R(\{v\}).$ We note that $\ol V$ is used for $R(V)$ in \cite{Brown_et_al}, \cite{Goncalves_Royer_regular_ideals} and \cite{Malazani}. We write $u\gneq v$ to denote that $u\geq v$ and $v\ngeq u.$ Note that this is equivalent with $R(u)\subsetneq R(v).$ 

\subsection{Leavitt path algebra}
Extend a graph $E$ to the graph with the same vertices and with edges $E^1\cup \{e^*\mid e\in E^1\}$ where the range and source functions are the same as in $E$ for $e\in E^1$ and $\so(e^*)=\ra(e)$  and $\ra(e^*)=\so(e)$ for the added edges. 
If $K$ is any field, the \emph{Leavitt path algebra} $L_K(E)$ of $E$ over $K$ is a free $K$-algebra generated by the set  $E^0\cup E^1\cup\{e^\ast\mid e\in E^1\}$ such that for all vertices $v,w$ and edges $e,f,$

\begin{tabular}{ll}
(V)  $vw =0$ if $v\neq w$ and $vv=v,$ & (E1)  $\so(e)e=e\ra(e)=e,$\\
(E2) $\ra(e)e^\ast=e^\ast\so(e)=e^\ast,$ & (CK1) $e^\ast f=0$ if $e\neq f$ and $e^\ast e=\ra(e),$\\
(CK2) $v=\sum_{e\in \so^{-1}(v)} ee^\ast$ for each regular vertex $v.$ &\\
\end{tabular}

By the first four axioms, each element of $L_K(E)$ is a sum of the form $\sum_{i=1}^n k_ip_iq_i^\ast$ for some $n$, paths $p_i$ and $q_i$, and elements $k_i\in K,$ for $i=1,\ldots,n$ where $v^*=v$ for $v\in E^0$ and $p^*=e_n^*\ldots e_1^*$ for a path $p=e_1\ldots e_n.$ Using this representation, it is direct to see that 
$L_K(E)$ is an involutive ring with 
$\left(\sum_{i=1}^n k_ip_iq_i^\ast\right)^*=\sum_{i=1}^n k_i^*q_ip_i^\ast$ where $k_i\mapsto k_i^*$ is any involution on $K$. In addition, $L_K(E)$ is locally unital (with the finite sums of vertices as the local units), and $L_K(E)$ is unital if and only if $E^0$ is finite in which case the sum of all vertices is the identity.

If we consider $K$ to be trivially graded by $\Zset,$ $L_K(E)$ is naturally graded by $\Zset$ so that the $n$-component $L_K(E)_n$ is the $K$-linear span of the elements $pq^\ast$ for paths $p, q$ with $|p|-|q|=n$ where $|p|$ denotes the length of a path $p.$ 

\subsection{Graded ideals of a Leavitt path algebra}
A subset $H$ of $E^0$ is said to be {\em hereditary} if $\ra(p)\in H$ for any path $p$ such that $\so(p)\in H.$ The set $H$ is {\em saturated} if $v\in H$ for any regular vertex $v$ such that $\ra(\so^{-1}(v))\subseteq H.$ For every $V\subseteq E^0,$ there is the smallest hereditary and saturated set which contains $V,$ called the hereditary and saturated closure of $V$ (see \cite[Lemma 2.0.7]{LPA_book}).  

If $H$ is hereditary and saturated, a {\em breaking vertex} of $H$ is an element of the set 
\[B_H=\{v\in E^0-H\,|\, v\mbox{ is an infinite emitter and }\so^{-1}(v)\cap \ra^{-1}(E^0-H)\mbox{ is nonempty and finite}\}.\]
For each $v\in B_H,$ let $v^H$ stands for $v-\sum ee^*$ where the sum is taken over $e\in \so^{-1}(v)\cap \ra^{-1}(E^0-H).$

An {\em admissible pair} is a pair $(H, S)$ where $H\subseteq E^0$ is hereditary and saturated and $S\subseteq B_H.$ If $E$ is a row-finite graph, we shorten the notation $(H,\emptyset)$ to only $H.$
For an admissible pair $(H,S)$, let $I(H,S)$ denote the ideal generated by the elements $H\cup \{v^H \,|\, v\in S \}.$ 
The ideal $I(H,S)$ is graded since it is generated by homogeneous elements and it is the $K$-linear span of the elements $pq^*$ for paths $p,q$ with $\ra(p)=\ra(q)\in H$ and the elements $pv^Hq^*$ for paths $p,q$ with $\ra(p)=\ra(q)=v\in S$ (see \cite[Lemma 5.6]{Tomforde}). The converse holds as well: for a graded ideal $I$, the set $H=I\cap E^0$ is hereditary and saturated and $S=\{v\in B_H\mid v^H\in I\}$ is such that $I=I(H,S)$ (\cite[Theorem 5.7]{Tomforde}, also \cite[Theorem 2.5.8]{LPA_book}). The set of admissible pairs is a lattice for the relation 
\[(H,S)\leq (K, T) \mbox{ if }H\subseteq K\mbox{ and }S\subseteq K\cup T\]
(see \cite[Proposition 2.5.6]{LPA_book} for the meet and the join of this lattice). The correspondence $(H,S)\mapsto I(H,S)$ is a lattice isomorphism of this lattice and the lattice of graded ideals. 

Each admissible pair $(H,S)$ gives rise to the quotient graph $E/(H,S),$ defined as below.
\[
\begin{array}{l}
(E/(H,S))^0=E^0-H \cup\{v'\mid v\in B_H-S\} \\
(E/(H,S))^1=\{e\in E^1\mid \ra(e)\notin H\}\cup\{e'\mid e\in E^1\mbox{ and }\ra(e)\in B_H-S\} 
\end{array}
\]
with $\so$ and $\ra$ the same as on $E^1$ for $e\in E^1 \cap (E/(H,S))^1$ and $\so(e')=\so(e),$ $\ra(e')=\ra(e)'$ for $e'\in (E/(H,S))^1.$ The  quotient algebra $L_K(E)/I(H,S)$ is graded isomorphic to $L_K(E/(H,S))$ (see \cite[Theorem 5.7]{Tomforde}).

Each  admissible pair $(H,S)$ also gives rise to the porcupine graph $P_{(H,S)}$ such that its Leavitt path algebra is graded isomorphic to the ideal $I(H,S)$ (see \cite[Theorem 3.3]{Lia_porcupine}). 
The graph $P_{(H,S)}$ is defined as follows. Let
\[
\begin{array}{l}
F_1(H,S)=\{e_1\ldots e_n\mbox{ is a path of }E\mid \ra(e_n)\in H, \so(e_n)\notin H\cup S\}, \\
F_2(H,S)=\{p\mbox{ is a path of }E\mid \ra(p)\in S,\; |p|>0\}.
\end{array}\]
For each $e\in (F_1(H,S)\cup F_2(H,S))\cap E^1,$ let $w^e$ be a new vertex and $f^e$ a new edge such that $\so(f^e)=w^e$ and $\ra(f^e)=\ra(e).$
Continue this process inductively as follows. 
For each path $p=eq$ where $q\in F_1(H,S)\cup F_2(H,S)$ and $|q|\geq 1,$ add a new vertex $w^p$ and a new edge $f^p$ such that $\so(f^p)=w^p$ and $\ra(f^p)=w^q.$ The set of vertices of $P_{(H,S)}$ is 
\[H\cup S\cup \{w^p \mid p\in F_1(H,S)\cup F_2(H,S)\}.\]
The set of edges of $P_{(H,S)}$  is
\[\{e\in E^1\,|\, \so(e)\in H\}\cup \{e\in E^1\,|\, \so(e)\in S, \ra(e)\in H\}\cup \{f^p\mid p\in F_1(H,S)\cup F_2(H,S)\}\]
The $\so$ and $\ra$ maps of $P_{(H,S)}$ are the same as in $E$ for the common edges and they are defined as above for the new edges.

The following result describes the generators of an ideal which is not necessarily graded.  

\begin{theorem} \cite[Theorem 4]{Ranga_ideals} and \cite[Proposition 2.8.5]{LPA_book}
Let $I$ be a nontrivial ideal of $L_K(E)$ and let $H=I\cap E^0,$ and $S=\{v\in B_H\mid v^H\in I\}.$
The ideal $I$ is generated by $H\cup \{v^H\mid v\in S\}\cup Y$ where $Y$ is a set of mutually orthogonal elements of the form $u + \sum_{i=1}^n k_i c^{m_i},$  $m_i$ are positive integers, $k_i\in K$ are such that at least one is nonzero, $c$ is a (unique) cycle such that its image in $E/(H,S)$ is a cycle without exits, and $u=\so(c).$
The ideal $I$ is nongraded if and only if $Y$ is nonempty.
\label{theorem_nongraded_ideals}
\end{theorem}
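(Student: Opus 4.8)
The plan is to reduce the general case to the analysis of an ideal meeting no vertex, and then to exploit the Laurent-polynomial structure of the corners determined by cycles without exits.

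First I would verify that $I(H,S)$ is the \emph{largest} graded ideal contained in $I$. The inclusion $I(H,S)\subseteq I$ is immediate from $H\subseteq I$ and $v^H\in I$ for $v\in S$. Conversely, if $J\subseteq I$ is graded, then $J=I(J\cap E^0,\{v\in B_{J\cap E^0}\mid v^H\in J\})$ by the correspondence recalled above, and $J\cap E^0\subseteq I\cap E^0=H$ forces $J\subseteq I(H,S)$. Write $I_{\gr}=I(H,S)$. Next I would pass to the quotient $L_K(E)/I_{\gr}\cong L_K(E/(H,S))$, abbreviate $F=E/(H,S)$, and let $\bar I$ be the image of $I$. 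A short lifting argument shows $\bar I\cap F^0=\emptyset$: if a vertex of $F$ lay in $\bar I$, it would lift to an element of $I$ differing from a vertex by a member of $I_{\gr}\subseteq I$, placing that vertex in $I\cap E^0=H$, contrary to the description of $F^0$. Thus the whole problem reduces to describing an ideal $\bar I$ of $L_K(F)$ with $\bar I\cap F^0=\emptyset$.

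The heart of the argument is the Reduction Theorem for Leavitt path algebras: for every nonzero $a\in L_K(F)$ there are paths $p,q$ with $p^*aq$ equal either to a nonzero scalar multiple of a vertex or to a nonzero polynomial in a cycle without exits. Because $\bar I$ contains no vertex, the first alternative is excluded for $a\in\bar I$, so every nonzero element of $\bar I$ is detected by some cycle without exits. For such a cycle $c$ with base vertex $u=\so(c)$, the corner $uL_K(F)u$ is isomorphic to the Laurent polynomial ring $K[x,x^{-1}]$ with $x$ corresponding to $c$; since $u\notin\bar I$, the ideal of $K[x,x^{-1}]$ corresponding to $u\bar Iu$ is a proper principal ideal, generated by a polynomial which, after clearing the unit $x$ and normalizing the constant coefficient, has the form $1+\sum_{i=1}^n k_ix^{m_i}$ with some $k_i\neq 0$. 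This yields the generator $u+\sum_{i=1}^n k_ic^{m_i}$. Collecting one such generator for each cycle without exits meeting $\bar I$ and lifting back to $L_K(E)$ produces the set $Y$. Mutual orthogonality is automatic: distinct cycles without exits have distinct, hence orthogonal, base vertices, and each $u=\so(c)$ is a two-sided local unit for its generator (as $uu=u$, $uc=cu=c$), so $y_iy_j=y_iu_iu_jy_j=0$ for $i\neq j$. The uniqueness of $c$ follows because in a cycle without exits every vertex emits only the single edge of the cycle, so the forward path from $u$ is forced and $u$ lies on exactly one cycle without exits.

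It remains to assemble the generators and to settle the last claim. One shows $I=\langle H\cup\{v^H\mid v\in S\}\cup Y\rangle$ by checking that the ideal generated by the right-hand side has both the same graded part $I_{\gr}$ and the same image $\bar I$ in $L_K(F)$; for the latter one applies the Reduction Theorem once more, now in the quotient of $L_K(F)$ by the ideal generated by the images of $Y$, to rule out any residual nonzero element (its reduction to a cycle without exits would live in a corner already killed by the corresponding $y_c$). Finally, $Y=\emptyset$ exactly when $\bar I=0$, i.e. when $I=I_{\gr}$ is graded; and when $Y\neq\emptyset$ each generator $u+\sum_{i}k_ic^{m_i}$ is visibly non-homogeneous, so $I$ strictly contains its largest graded subideal and is therefore non-graded. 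I expect the main obstacle to be precisely the generation step: proving that the corner-by-corner polynomial generators already generate all of $\bar I$, which is where the Reduction Theorem must be invoked in a suitable quotient and combined with the fact that each relevant corner is a principal ideal domain.
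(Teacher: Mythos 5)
First, a structural point: the paper does not prove this statement at all --- it is quoted as a prerequisite from \cite{Ranga_ideals} and \cite[Proposition 2.8.5]{LPA_book} --- so your attempt can only be compared with the standard proof in those sources. Your skeleton does follow that proof: $I(H,S)$ is the largest graded ideal inside $I$; the image $\bar I$ of $I$ in $L_K(F)$, $F=E/(H,S)$, meets no vertex (for the added vertices $v'$ with $v\in B_H-S$ this needs the identification of $v'$ with the class of $v-v^H$, a detail your lifting argument skips, but it is routine); the Reduction Theorem ties every nonzero element of $\bar I$ to a cycle $c$ without exits; and each corner $uL_K(F)u$ with $u=\so(c)$ is isomorphic to $K[x,x^{-1}]$, whose proper principal ideal $u\bar Iu$ has a normalized generator $y_c=u+\sum_{i=1}^n k_ic^{m_i}$. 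The orthogonality and uniqueness arguments and the final graded/nongraded equivalence are correct.

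The genuine gap is the generation step $\bar I\subseteq \bar J$, where $\bar J$ is the ideal generated by the elements $y_c$ --- exactly where you predicted trouble --- and your proposed repair does not work. The quotient of $L_K(F)$ by $\bar J$ is not a Leavitt path algebra, because $\bar J$ is not graded, so the Reduction Theorem (a statement about Leavitt path algebras) cannot be invoked there. Applying it inside $L_K(F)$ itself gives nothing either: for nonzero $a\in\bar I$ one obtains paths $p,q$ with $0\neq p^*aq\in u\bar Iu\subseteq\bar J$, but $p^*aq\in\bar J$ does not imply $a\in\bar J$, since the reduction may annihilate precisely the component of $a$ lying outside $\bar J$; no contradiction can be extracted this way. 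The missing ingredient, which the proofs in \cite{Ranga_ideals} and \cite{LPA_book} supply, is the structure theorem for ideals with trivial vertex intersection: any such ideal of $L_K(F)$ is contained in the ideal $I_{\mathrm{cne}}$ generated by the vertices lying on cycles without exits, and $I_{\mathrm{cne}}\cong\bigoplus_c M_{\Lambda_c}(K[x,x^{-1}])$ is a direct sum of orthogonal, locally unital ideals. This forces $\bar I=\bigoplus_c\left(\bar I\cap M_c\right)$, each summand is of the form $M_{\Lambda_c}(J_c)$ for an ideal $J_c$ of $K[x,x^{-1}]$, and $M_{\Lambda_c}(J_c)$ is generated as a two-sided ideal by the single corner element $y_c$. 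Without the containment $\bar I\subseteq I_{\mathrm{cne}}$ and this matrix decomposition, the corner-by-corner data $u\bar Iu=\langle y_c\rangle$ does not determine $\bar I$, so the principal-ideal-domain argument alone cannot close the proof.
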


\section{Annihilator ideals of Leavitt path algebras}

In the rest of the paper, $E$ is an arbitrary graph, $K$ is a field, and $L_K(E)$ is the Leavitt path algebra of $E$ over $K.$

\subsection{Left and right annihilators are graded}
By \cite[Theorem 3.3]{Goncalves_Royer_regular_ideals}, $\ann(I)$ is a graded ideal for any ideal $I$ of a Leavitt path algebra. In fact, the proof of \cite[Theorem 3.3]{Goncalves_Royer_regular_ideals} shows that $\ann_l(I)$ and $\ann_r(I)$ are {\em also} graded ideals. The proof of the next proposition contains some more details.  

\begin{proposition}
If $I$ is an ideal of $L_K(E),$ then the ideals $\ann_l(I)$ and $\ann_r(I)$ are graded. 
\label{annihilators_are_graded} 
\end{proposition}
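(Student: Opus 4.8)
The plan is to reduce the statement for right annihilators to the one for left annihilators using the involution, and then to establish the gradedness of $\ann_l(I)$ by isolating, via Theorem \ref{theorem_nongraded_ideals}, the only place where non-homogeneity can enter: the cyclic generators of $I.$ For the reduction I would first observe that $*$ reverses the grading, since $pq^*\in L_K(E)_n$ (i.e.\ $|p|-|q|=n$) gives $(pq^*)^*=qp^*\in L_K(E)_{-n},$ so $*$ carries $L_K(E)_n$ onto $L_K(E)_{-n}.$ Hence a two-sided ideal $J$ is graded if and only if $J^*$ is graded. As $I^*$ is again an ideal and, by the $*$-ring identities recalled in Section \ref{section_prerequisites}, $\ann_r(I)=\ann_l(I^*)^*,$ it suffices to prove that $\ann_l(J)$ is graded for an arbitrary ideal $J$; applying this with $J=I^*$ then yields the gradedness of $\ann_r(I),$ and $\ann(I)=\ann_l(I)\cap\ann_r(I)$ follows as an intersection of graded ideals, recovering \cite[Theorem 3.3]{Goncalves_Royer_regular_ideals}.

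To treat $\ann_l(I),$ I would use the criterion that a two-sided ideal is graded exactly when it contains every homogeneous component of each of its elements. The routine half is the following elementary fact: if $N$ is a \emph{graded} ideal, then $\ann_l(N)$ is graded, because for a homogeneous $b\in N$ the products $x_nb$ lie in pairwise distinct degrees, so $xb=0$ forces $x_nb=0$ for every $n,$ and letting $b$ range over the homogeneous elements of $N$ gives $x_n\in\ann_l(N).$ Writing $H=I\cap E^0$ and $S=\{v\in B_H\mid v^H\in I\},$ the graded ideal $I(H,S)$ is contained in $I,$ so for $x=\sum_n x_n\in\ann_l(I)$ this already yields $x_n\in\ann_l(I(H,S))$ for all $n.$ It therefore remains to control the homogeneous components coming from the non-graded part of $I,$ and the cleanest way to organize this is to prove that $x$ annihilates the smallest graded ideal $\widehat I$ containing $I$ (equivalently, that $x$ kills every homogeneous component of every element of $I$); since $\widehat I$ is graded, the elementary fact then gives gradedness of $\ann_l(\widehat I)=\ann_l(I).$

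The crux is the non-homogeneous generators. By Theorem \ref{theorem_nongraded_ideals}, $I$ is generated by $H\cup\{v^H\mid v\in S\}$ together with elements $g=u+\sum_i k_i c^{m_i},$ where $c$ is a cycle with $u=\so(c)=\ra(c)$ whose image in $E/(H,S)$ is a cycle without exits; equivalently every exit of $c$ in $E$ has range in $H,$ so all such exit edges already lie in $I$ and are annihilated by $x.$ The difficulty is that in the equation $xg=0$ the components of $x$ are coupled across the degrees $n$ and $n+m_i|c|,$ so no bare degree count separates them. To decouple them I would pass to the quotient $\ol{L_K(E)}:=L_K(E)/I(H,S)\cong L_K(E/(H,S))$ and work in the corner $\ol u\,\ol{L_K(E)}\,\ol u;$ since $c$ has no exit there, this corner is a Laurent polynomial ring $K[z,z^{-1}],$ an integral domain in which the image of $g$ is the nonzero element $1+\sum_i k_i z^{m_i}.$ Right multiplication by this element is injective, which forces the cyclic part of $x$ at $u$ to vanish modulo $I(H,S).$ Upgrading this to the exact identity $xu=0$ is where I would combine the relation $c^*c=u$ with the fact that $x$ annihilates the exit edges: through (CK2) these pin down the lowest-filtration terms of $xu,$ after which the relation $xg=0$ propagates to kill the remaining cyclic terms. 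Once $xu=0,$ one gets $xc^{m_i}=xu\,c^{m_i}=0$ as well, so $x$ annihilates the homogeneous components of $g.$

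Finally, hereditariness and saturation propagate the vanishing of $x$ on the base vertices $u$ of the cyclic generators to the entire hereditary saturated closure of $H$ together with these vertices, so that $x$ annihilates $\widehat I;$ combined with $\ann_l(\widehat I)\subseteq\ann_l(I)$ (from $I\subseteq\widehat I$) this gives $\ann_l(I)=\ann_l(\widehat I),$ hence $\ann_l(I)$ is graded. The step I expect to be most delicate is exactly the passage from the domain-corner cancellation, which only detects $\ann_l$ modulo $I(H,S),$ to exact annihilation at the cycle; this is the point at which the exits into $H$ and the relation (CK2) must be used carefully, and it is presumably the ``more details'' alluded to beyond the argument of \cite{Goncalves_Royer_regular_ideals}.
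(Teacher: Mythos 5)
Your reduction of $\ann_r$ to $\ann_l$ via the involution is correct (it is the paper's ``by symmetry'' step in different clothing), and your overall architecture --- reduce to the Rangaswamy generators of Theorem \ref{theorem_nongraded_ideals}, then show that $x\in\ann_l(I)$ annihilates a graded ideal containing $I$ --- is sound and genuinely different from the paper's proof, which does not redo any computation but simply quotes the proof of \cite[Theorem 3.3]{Goncalves_Royer_regular_ideals} for the gradedness of the sets $V=\{x\mid xa=0\mbox{ for all }a\in A\}$ and $W=\{x\mid xra=0\mbox{ for all }r\mbox{ and }a\}$ and then identifies $\ann_l(I)=V\cap W$. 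However, your treatment of the crux --- the nongraded generators $g=u+\sum_ik_ic^{m_i}$ --- has a genuine gap, in fact three. First, the corner argument does not apply as stated: $\ol x\,\ol u$ lies in the column $\ol{L_K(E)}\,\ol u$, not in the corner $\ol u\,\ol{L_K(E)}\,\ol u\cong K[z,z^{-1}]$, so injectivity of right multiplication by $1+\sum_ik_iz^{m_i}$ on the corner says nothing about $\ol x\,\ol u$; one would need, e.g., the matrix picture $M_\Lambda(K[z,z^{-1}])$ of the ideal generated by a cycle without exits to make this step honest. Second, the promotion of $xu\in I(H,S)$ to $xu=0$ --- which you yourself flag as the delicate point --- is not an argument but a wish list ($c^*c=u$, exits, (CK2), ``propagates to kill the remaining cyclic terms''); nothing is actually proved there, and for nongraded $I$ this is precisely the content of the proposition. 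Third, even granting $xu=0$, to conclude that $x$ annihilates a graded ideal $\widehat I\supseteq I$ you also need $xru=0$ for every $r\in L_K(E)$, and the closing appeal to ``hereditariness and saturation'' does not produce this.

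The frustrating part is that the gap closes by an argument far simpler than your quotient detour, which renders the detour unnecessary. Since $g=ug$, the element $y:=xu$ satisfies $yg=xg=0$ and $yu=y$, hence $y=-\sum_ik_iyc^{m_i}=yh$ where $h=-\sum_ik_ic^{m_i}$, and so $y=yh^N$ for every $N$. Every homogeneous component of $h^N$ is a scalar multiple of some $c^M$ with $M\geq N$, hence has degree $M|c|\geq N$; so all homogeneous components of $yh^N$ sit in degrees at least $N$ plus the lowest degree occurring in $y$. As $y$ has only finitely many nonzero components, $y=yh^N$ for all $N$ forces $y=0$. Applying the same computation to $xr$ in place of $x$ (legitimate because $rg\in I$) gives $xru=0$, hence $xrc^{m_i}=0$, for all $r\in L_K(E)$. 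Together with the fact that $x$ kills $H\cup\{v^H\mid v\in S\}$ and all left multiples thereof (these lie in $I$), this shows that $x$ annihilates the graded ideal generated by $H\cup\{v^H\mid v\in S\}\cup\{\so(c_g)\mid g\in Y\}$, which contains $I$; your elementary fact about left annihilators of graded ideals then finishes the proof. With this replacement your argument becomes complete and fully self-contained, which is arguably a gain over the paper's proof, whose key gradedness claims are outsourced to \cite{Goncalves_Royer_regular_ideals}.
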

\begin{proof}
If $I$ is trivial, the claim trivially holds since both one-sided annihilators of $I$ are equal to $L_K(E).$ Thus, assume that $I$ is nontrivial. Let $A=(I\cap E^0)\cup\{v\in B_H\mid v^H\in I\})\cup Y$ where $Y$ is nonempty if and only if $I$ is not graded and the precise definition of $Y$ is given in Theorem \ref{theorem_nongraded_ideals}. If 
\[V=\{x\in L_K(E)\mid xa=0\mbox{ for all }a\in A\},\]
then $V$ is a left ideal of $L_K(E).$ By the proof of \cite[Theorem 3.3]{Goncalves_Royer_regular_ideals}, $V$ is graded. Let 
\[W=\{x\in L_K(E)\mid xra=0\mbox{ for all }r\in L_K(E)\mbox{ and all }a\in A\}.\]
Then $W$ is a (two-sided) ideal of $L_K(E).$  By the proof of \cite[Theorem 3.3]{Goncalves_Royer_regular_ideals}, $W$ is graded. Thus, $V\cap W$ is a graded left ideal of $L_K(E)$ (in fact, it is $W,$ so it is also a right ideal, see Remark \ref{remark_on_annihilators}). 

We claim that $\ann_l(I)=V\cap W.$ Indeed, since both $A$ and $L_K(E)A$ are contained in $I,$ we have that $\ann_l(I)$ is contained in $V$ and in $W,$ so it is contained in $V\cap W.$ To show the converse, let $x\in V\cap W$ and $r\in I.$ By Theorem \ref{theorem_nongraded_ideals}, $r=\sum_{i=1}^n s_ia_it_i$ for some positive integer $n,$ $s_i, t_i\in L_K(E),$ and $a_i\in A$ for $i=1,\ldots, n.$ Since $x\in V\cap W$, $xs_ia_i=0$ for all $i=1,\ldots, n.$ Hence, $xr=0.$

This shows that $\ann_l(I)$ is graded and the proof for $\ann_r(I)$ follows by symmetry. 
\end{proof}

\begin{remark} Since $L_K(E)$ has local units, note that $V$ and $W$ in the above proof are such that $W\subseteq V.$ Indeed, if $x\in W$ and $a\in A,$ let $u$ be a local unit for $x$ (so $xu=ux=x$). Then $xa=xua=0$ since $x\in W,$ so $x\in V.$ The same argument shows that $W'\subseteq V'$ where $V'$ and $W'$ are the sets from the proof of \cite[Theorem 3.3]{Goncalves_Royer_regular_ideals} ($V'$ and $W'$ are the ``right versions'' of $V$ and $W$). Thus, $\ann_l(I)=W,$ $\ann_r(I)=W'$ and $\ann(I)=W\cap W'.$   
\label{remark_on_annihilators}
\end{remark}

If $M$ is a left module, a right module, or a bimodule, then $\ann_l(M),$ $\ann_{r}(M),$ or $\ann(M)$ may not be graded ideals. To illustrate this, note that $E$ has Condition (K) if and only if each ideal is graded (see \cite[Proposition 2.9.9]{LPA_book} or note that this follows also from Theorem \ref{theorem_nongraded_ideals}). So, let $E$ be any graph which does not satisfy Condition (K) (e.g. $\xymatrix{\bullet^v\ar@(ur, dr)^e}$) and let $I$ be an ideal of $L_K(E)$ which is not graded (for example, the ideal generated by $v+e$). Since $L_K(E)$ is a locally unital ring, we have that $I=\ann_l(L_K(E)/I)=\ann_r(L_K(E)/I)=\ann(L_K(E)/I).$ So, as $I$ is not graded, neither of the three annihilators is graded. 

Corollary \ref{corollary_on_l_r_ann} enables us to drop the subscripts $l$ and $r$ for the annihilators of graded ideals. 

\begin{corollary}
If $I$ is a {\em graded} ideal of $L_K(E)$, then 
$\;\ann_l(I)=\ann_r(I)=\ann(I).$
\label{corollary_on_l_r_ann}
\end{corollary}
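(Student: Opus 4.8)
The plan is to derive everything from Proposition \ref{annihilators_are_graded} together with the $*$-structure recorded in the Prerequisites. The key observation is that a graded ideal $I$ is a $*$-invariant subset: since $I=I(H,S)$ is the $K$-linear span of elements of the form $pq^*$ (with $\ra(p)=\ra(q)\in H$) and $pv^Hq^*$ (with $\ra(p)=\ra(q)=v\in S$), and since $(pq^*)^*=qp^*$ lies in the same span (and $(v^H)^*=v^H$ because $v^H=v-\sum ee^*$ is self-adjoint), we get $I^*=I$. I would state this $*$-invariance explicitly as the first step, as it is the engine for the whole argument.

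Next I would invoke the general $*$-ring identity recorded in the Prerequisites, namely $\ann_l(J)^*=\ann_r(J^*)$ for a left ideal $J$ (here applied with $J=I$ a two-sided ideal). Applying this with $J=I$ and using $I^*=I$ gives $\ann_l(I)^*=\ann_r(I^*)=\ann_r(I)$. Now I would apply the involution to both sides: since $*$ has order two, taking $*$ of $\ann_l(I)^*=\ann_r(I)$ yields $\ann_l(I)=\ann_r(I)^*$, but combined with the previous identity this shows $\ann_l(I)$ and $\ann_r(I)$ are interchanged by $*$. The cleaner route is simply $\ann_l(I)=\ann_l(I^*)^*{}^*=\ann_r(I)$, so once $I^*=I$ is established the equality $\ann_l(I)=\ann_r(I)$ is almost immediate from the $*$-identity.

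The remaining point is the passage from $\ann_l(I)=\ann_r(I)$ to $\ann_l(I)=\ann_r(I)=\ann(I)$. Since $\ann(I)=\ann_l(I)\cap\ann_r(I)$ by definition, the equality of the two one-sided annihilators forces this intersection to equal each of them; no further work is needed for this implication, so I would dispatch it in one line. Thus the genuine content is entirely concentrated in establishing $I^*=I$ and then feeding it into the $*$-ring identity.

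I do not expect a serious obstacle here, but the step requiring the most care is verifying $I^*=I$ at the level of generators rather than assuming it. One must check that the self-adjointness of the generating set $H\cup\{v^H\mid v\in S\}$ propagates to the two-sided ideal it generates, which follows because $*$ is an anti-automorphism: $(\sum s_i g_i t_i)^*=\sum t_i^* g_i^* s_i^*$ and $g_i^*$ is again a generator when the $g_i$ are self-adjoint. Alternatively, and perhaps more transparently, one can cite directly that the homogeneous spanning set for $I(H,S)$ given in the Prerequisites is closed under $*$. I would favour the generator-level argument since it makes the role of the $*$-algebra structure of $L_K(E)$ explicit and keeps the proof self-contained.
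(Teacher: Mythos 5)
Your opening step ($I^*=I$ for a graded ideal, via the $*$-closed spanning set) and your closing reduction ($\ann(I)=\ann_l(I)\cap\ann_r(I)$, so equality of the one-sided annihilators finishes the proof) are both fine, but the middle of your argument has a genuine gap. From the $*$-ring identity and $I^*=I$ you obtain $\ann_l(I)^*=\ann_r(I)$, i.e.\ exactly what you yourself observe: the involution \emph{interchanges} the two one-sided annihilators. That is strictly weaker than equality. Your ``cleaner route'' $\ann_l(I)=\ann_l(I^*)^*{}^*=\ann_r(I)$ silently drops a $*$: computing correctly, $\bigl(\ann_l(I^*)^*\bigr)^*=\ann_r(I^{**})^*=\ann_r(I)^*$, so the chain only yields $\ann_l(I)=\ann_r(I)^*$ — the same swap statement again. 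The inference ``$I^*=I$ plus the $*$-identity implies $\ann_l(I)=\ann_r(I)$'' is false in general $*$-rings: take $R=A\times A^{\mathrm{op}}$ with the swap involution $(a,b)^*=(b,a)$ and $I=J\times J$ where $J$ is an ideal of $A$ with $\ann_l(J)\neq\ann_r(J)$ (e.g.\ the strictly upper triangular matrices inside the upper triangular $2\times 2$ matrices); then $I^*=I$ but $\ann_l(I)$ and $\ann_r(I)$ are distinct ideals interchanged by $*$.

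The missing ingredient is that $\ann_l(I)$ is itself $*$-invariant, and this is precisely where Proposition \ref{annihilators_are_graded} must be used — you announce it in your plan but never actually invoke it in the argument. By that proposition $\ann_l(I)$ and $\ann_r(I)$ are \emph{graded} ideals, and every graded ideal of $L_K(E)$ is $*$-invariant (by the very spanning-set argument you apply to $I$, since a graded ideal has the form $I(H,S)$ with self-adjoint generators). Then $\ann_l(I)=\ann_l(I)^*=\ann_r(I)$, which is exactly how the paper closes the proof. (As an aside, an alternative repair avoiding gradedness altogether is to use that $L_K(E)$ is semiprime: for any two-sided ideal $I$ and $x$ with $xI=0$, the left ideal $Ix$ satisfies $(Ix)^2\subseteq I(xI)x=0$, hence $Ix=0$; this gives $\ann_l(I)=\ann_r(I)$ for \emph{all} ideals. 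But as written, your proof does not establish the corollary.)
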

\begin{proof}
If $I$ is a graded ideal, then $I^*=I$ (see \cite[Lemma 5.6]{Tomforde}). By Proposition \ref{annihilators_are_graded}, $\ann_l(I)$ and $\ann_r(I)$ are graded, so they are also $*$-invariant. 
Hence, $\ann_l(I)=\ann_l(I)^*=\ann_l(I^*)^*=\ann_r(I).$ This also implies that $\ann(I)=\ann_l(I)=\ann_r(I).$
\end{proof}

\subsection{Annihilator ideals via the admissible pairs}
If $V$ is any set of vertices, let 
\[V^\bot=E^0-R(V)\]
where $R(V)$ is the root of $V.$ It is direct to check that $V^\bot$ is hereditary and saturated. We note that $V^\bot$ corresponds to $V'$ from \cite{Malazani} and $E^0-\ol V$ from \cite{Brown_et_al} and \cite{Goncalves_Royer_regular_ideals}. We use the notation $^\bot$ to emphasize the analogy to taking the orthogonal complements in a Hilbert space.

If $E$ is a row-finite graph and $H$ is a hereditary and saturated set, then  $\ann(I(H))=I(H^\bot)$ by \cite[Proposition 4.2]{Malazani}. Proposition \ref{annihilator_of_graded_ideal} generalizes this result for graphs which are not necessarily row-finite. Before this proposition, we show a short lemma. 

\begin{lemma}
If $v$ is a vertex of $E$ such that $v\in B_{H^\bot},$ then 
$v^{H^\bot}=v-\sum ee^*$ where the sum is taken over the elements $e$ of the nonempty and finite set $ \so^{-1}(v)\cap \ra^{-1}(R(H)).$ We also have that  $v\in R(H)-H.$
\label{lemma_on_B_H_bot}
\end{lemma}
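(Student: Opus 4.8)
The plan is to unwind the definition of breaking vertex applied to the set $K=H^\bot=E^0-R(H)$. Since it was observed just before the lemma that $V^\bot$ is always hereditary and saturated, $H^\bot$ is hereditary and saturated and so $B_{H^\bot}$ is well defined. The key bookkeeping observation is that
\[
E^0-H^\bot=E^0-(E^0-R(H))=R(H),
\]
so that every occurrence of $E^0-K$ in the definitions of $B_K$ and of $v^K$ becomes $R(H)$ once we set $K=H^\bot$. Everything about the displayed formula is then a direct substitution.

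First I would record that $v\in B_{H^\bot}$ means, by definition, that $v\in E^0-H^\bot=R(H)$, that $v$ is an infinite emitter, and that the set $\so^{-1}(v)\cap\ra^{-1}(E^0-H^\bot)=\so^{-1}(v)\cap\ra^{-1}(R(H))$ is nonempty and finite. The claimed expression for $v^{H^\bot}$ now follows immediately from the general definition $v^K=v-\sum ee^*$, where the sum ranges over $e\in\so^{-1}(v)\cap\ra^{-1}(E^0-K)$, simply by replacing $E^0-H^\bot$ with $R(H)$. This same unwinding already yields $v\in R(H)$, so the only remaining point is to verify that $v\notin H$.

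The one step requiring an actual argument, and the only place where something could go wrong, is showing $v\notin H$; I would do this by contradiction. Suppose $v\in H$. Because $H$ is hereditary and $v$ emits infinitely many edges, every edge $e$ with $\so(e)=v$ satisfies $\ra(e)\in H$. Since each vertex of $H$ reaches itself via the trivial path, $H\subseteq R(H)$, and therefore $\ra(e)\in R(H)$ for all such $e$. But then $\so^{-1}(v)\cap\ra^{-1}(R(H))=\so^{-1}(v)$, which is infinite, contradicting the finiteness of $\so^{-1}(v)\cap\ra^{-1}(R(H))$ built into the hypothesis $v\in B_{H^\bot}$. Hence $v\notin H$.

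Combining $v\in R(H)$ with $v\notin H$ gives $v\in R(H)-H$, completing the proof. I expect no genuine obstacle here: the formula is pure substitution, and the membership claim $v\in R(H)-H$ rests only on the interaction between heredity of $H$ and the finiteness condition in the definition of a breaking vertex.
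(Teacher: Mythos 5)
Your proof is correct and follows essentially the same route as the paper: substitute $E^0-H^\bot=R(H)$ into the definition of a breaking vertex to get the formula, then rule out $v\in H$ by noting that heredity of $H$ together with $H\subseteq R(H)$ would force all infinitely many edges out of $v$ to land in $R(H)$, contradicting the finiteness built into $v\in B_{H^\bot}$. The only (immaterial) difference is that you read $v\in R(H)$ directly off the condition $v\in E^0-H^\bot$ in the definition of $B_{H^\bot}$, whereas the paper deduces it from the nonemptiness of $\so^{-1}(v)\cap\ra^{-1}(R(H))$; both are one-line observations.
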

\begin{proof}
Note that $E^0-H^\bot=R(H)$ since $H^\bot=E^0-R(H).$ Hence, the formula for $v^{H^\bot}$ follows directly from the definition of a breaking vertex. Since the set $ \so^{-1}(v)\cap \ra^{-1}(R(H))$ is nonempty, 
$v\in R(H).$ Assuming that $v\in H$ implies that $ \ra(\so^{-1}(v))\subseteq H\subseteq R(H).$ As $v$ emits only finitely many edges to $R(H),$ this cannot happen and so $v\notin H.$ Hence, $v\in R(H)-H.$ 
\end{proof}

\begin{proposition}
If $(H,S)$ is an admissible pair of $E$, then 
\[\ann(I(H,S))=I(H^\bot, S^\bot)\]
where $H^\bot=E^0-R(H)$ and $S^\bot=B_{H^\bot}-S.$
\label{annihilator_of_graded_ideal}
\end{proposition}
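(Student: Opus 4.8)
The plan is to exploit that $\ann(I(H,S))$ is already known to be graded. By Proposition \ref{annihilators_are_graded} and Corollary \ref{corollary_on_l_r_ann}, $\ann(I(H,S))=\ann_l(I(H,S))$ is a graded ideal, hence by the correspondence between graded ideals and admissible pairs (\cite[Theorem 5.7]{Tomforde}) it equals $I(H',S')$ for $H'=\ann(I(H,S))\cap E^0$ and $S'=\{v\in B_{H'}\mid v^{H'}\in\ann(I(H,S))\}$. So it suffices to show $H'=H^\bot$ and $S'=S^\bot$, and because the double annihilator agrees with the left annihilator I only need to decide when left multiplication by a vertex, or by $v^{H^\bot}$, kills every generator of $I(H,S)$. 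Throughout I use that $I(H,S)$ is spanned by the monomials $pq^*$ with $\ra(p)=\ra(q)\in H$ (type (a)) and $pv_0^Hq^*$ with $\ra(p)=\ra(q)=v_0\in S$ (type (b)), together with the rule $vp=p$ when $\so(p)=v$ and $vp=0$ otherwise.

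First I would identify $H'$. If $v\notin R(H)$, then in any generator the left factor $vp$ vanishes unless $\so(p)=v$; but $\so(p)=v$ gives $v\geq\ra(p)$, and $\ra(p)$ lies in $H$ for type (a), while for type (b) it is a breaking vertex $v_0\in S\subseteq B_H$, which belongs to $R(H)$ because an infinite emitter in $B_H$ sends all but finitely many of its edges into $H$. In both cases $v\in R(H)$, a contradiction; hence $v$ annihilates $I(H,S)$ and $v\in H'$. Conversely, if $v\in R(H)$, choose a path $p$ with $\so(p)=v$ and $\ra(p)\in H$; then $pp^*$ is a nonzero type (a) element of $I(H,S)$ with $v\cdot pp^*=pp^*\neq0$, so $v\notin H'$. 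Thus $H'=E^0-R(H)=H^\bot$.

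Next I would identify $S'$. The crucial preliminary is that $B_H\cap B_{H^\bot}=\emptyset$: an element of $B_H$ is an infinite emitter sending infinitely many edges into $H\subseteq R(H)=E^0-H^\bot$, so it cannot send only finitely many edges into $E^0-H^\bot$ and therefore is not a breaking vertex of $H^\bot$. In particular $S\cap B_{H^\bot}=\emptyset$, so $S^\bot=B_{H^\bot}-S=B_{H^\bot}=B_{H'}$; since $S'\subseteq B_{H'}$ by definition, it remains to prove that $v^{H^\bot}\in\ann(I(H,S))$ for every $v\in B_{H^\bot}$. Using the formula for $v^{H^\bot}$ from Lemma \ref{lemma_on_B_H_bot}, I would verify this one generator at a time. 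When the path part $p$ has positive length and $\so(p)=v$, writing $p=e_1p''$ the first edge satisfies $\ra(e_1)\geq\ra(p)\in R(H)$, so $e_1$ is one of the edges subtracted in $v^{H^\bot}$; the two terms of $v^{H^\bot}p=vp-\sum ee^*p$ then cancel by relation (CK1), giving $v^{H^\bot}p=0$ and hence annihilation of both type (a) and type (b) monomials (those with $\so(p)\neq v$ are killed outright). The one remaining case is a type (b) monomial whose path part is the vertex $v_0\in S$, namely $v_0^Hq^*$; here $v\neq v_0$ by the disjointness just shown, and since every edge appearing in $v^{H^\bot}$ has source $v$ while every edge in $v_0^H$ has source $v_0$, a short orthogonality computation gives $v^{H^\bot}v_0^H=0$. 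Hence $v^{H^\bot}$ annihilates $I(H,S)$, so $S'=B_{H^\bot}=S^\bot$ and $\ann(I(H,S))=I(H^\bot,S^\bot)$.

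I expect the main obstacle to be the type (b) analysis: recognizing the disjointness $B_H\cap B_{H^\bot}=\emptyset$, which both collapses $S^\bot$ to $B_{H^\bot}$ and excludes the coincidence $v=v_0$, and then executing the cancellation $v^{H^\bot}p=0$ with due attention to exactly which first edges $e_1$ occur in the defining sum of $v^{H^\bot}$. The determination of $H'$ and the type (a) computation are comparatively routine consequences of the multiplication rules in $L_K(E)$.
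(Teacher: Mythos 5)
Your proof is correct and follows essentially the same route as the paper's: reduce to a comparison of admissible pairs (using that $\ann(I(H,S))$ is graded and that $\ann_l=\ann$ for graded ideals), identify $\ann(I(H,S))\cap E^0=H^\bot$ by testing left multiplication on the spanning monomials, and annihilate the generators by $v^{H^\bot}$ via the same cancellation $v^{H^\bot}p=p-e_1e_1^*p=0$ resting on Lemma \ref{lemma_on_B_H_bot}. Your only organizational difference --- proving $B_H\cap B_{H^\bot}=\emptyset$ up front, so that $S^\bot=B_{H^\bot}$ and the inclusion $\{v\in B_{H^\bot}\mid v^{H^\bot}\in\ann(I(H,S))\}\subseteq S^\bot$ becomes automatic --- is the very counting argument (infinitely many edges into $H\subseteq R(H)$ versus finitely many into $R(H)$) that the paper runs inside its converse direction for $S^\bot$, just repositioned.
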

\begin{proof}
Since both $\ann(I(H,S))$ and $I(H^\bot, S^\bot)$ are graded ideals, to show their equality it is sufficient to show that $H^\bot=\ann(I(H,S))\cap E^0$ and that 
$S^\bot=\{v\in B_{H^\bot}\mid v^{H^\bot}\in \ann(I(H,S))\}.$

Let $v\in H^\bot.$ To show that $vx=0$ for all $x\in I(H,S),$ it is sufficient to show that $v$ annihilates all the elements that generate $I(H,S)$ as a $K$-vector space. So, let $p$ and $q$ be paths such that $\ra(p)=\ra(q)\in H.$ Since $v\notin R(H)$ and $\so(p)\in R(H),$ $vp=0$ and so $vpq^*=0.$
If $w\in S,$ then $w\in R(H)$ by the definition of $B_H.$ As $v\notin R(H),$ $v\neq w$ and so $vw^H=0$ and $vpw^Hq^*=0$ for all paths $p$ and $q$ with $\ra(p)=\ra(q)=w.$ 

Conversely, if $v\in \ann(I(H,S))\cap E^0,$ then $vp=0$ for every path with its range in $H.$ This shows that $v$ cannot be the source of any such path, so $v\notin R(H).$ Hence, $v\in H^\bot.$

This shows that $H^\bot=\ann(I(H,S))\cap E^0.$ Let us show that 
$S^\bot=\{v\in B_{H^\bot}\mid v^{H^\bot}\in \ann(I(H,S))\}.$
Let $v\in S^\bot=B_{H^\bot}-S$ first. We claim that $v^{H^\bot}$ annihilates any $K$-space generator of $I(H,S).$ 
If $p$ and $q$ are paths such that $\ra(p)=\ra(q)\in H,$ then $v^{H^\bot}pq^*=(v-\sum ee^*)pq^*$ where the sum is taken over the nonempty and finite set $\so^{-1}(v)\cap \ra^{-1}(R(H))$ by Lemma \ref{lemma_on_B_H_bot}.

If $v\neq \so(p),$ then $vp=0$ and $ee^*p=0$ for every $e\in \so^{-1}(v)\cap \ra^{-1}(R(H)),$ so the claim trivially holds. If $v=\so(p),$ then $p$ is of nonzero length since $\ra(p)\in H$ and $\so(p)\in R(H)-H$ by Lemma \ref{lemma_on_B_H_bot}. So, one of the edges, say $e,$ from $\so^{-1}(v)\cap \ra^{-1}(R(H))$ is the first edge of $p$ and $v^{H^\bot}p=p-ee^*p=p-p=0,$ so the claim holds again. 

If $w\in S,$ then $v\neq w$ since $v\notin S.$ So, $v^{H^\bot}w^H=0.$ If $p$ and $q$ are paths such that $\ra(p)=\ra(q)=w,$ and the length of $p$ is larger than zero, then $v^{H^\bot}p=0,$ so $v^{H^\bot}pw^Hq^*=0.$

Conversely, let $v\in B_{H^\bot}$ be such that $v^{H^\bot}\in\ann(I(H,S)).$ Since $v^{H^\bot}$ annihilates the elements of $I(H,S),$ $v^{H^\bot}w^H=0$ for every $w\in S.$ As $w$ emits infinitely many edges to $H$ and $v$ emits only finitely many edges to $R(H),$ $v\neq w$ for every $w\in S.$ Thus, $v\notin S.$ Hence, $v\in B_{H^\bot}-S.$ 
\end{proof}

If $I$ is an ideal which is not graded, $H=I\cap E^0$ and $S=\{v\in B_H\mid v^H\in I\},$ then the graded ideals $\ann_l(I)$ or $\ann_r(I)$ may be strictly contained in $\ann(I(H,S))=I(H^\bot, S^\bot)$ as the next example shows.  

\begin{example}
Let $E$ be the graph $\xymatrix{\bullet^u\ar@(ul,dl)_e   & \bullet^v\ar[l]_f }$ and let $I$ be the ideal generated by $u+e.$ By Theorem \ref{theorem_nongraded_ideals}, $I$ is not graded. 
We have that $H=I\cap E^0=\emptyset,$ so $H^\bot=E^0$ and $I(H^\bot)=L_K(E).$ On the other hand, $\ann_l(I)$ is strictly contained in $L_K(E)$ since $u(u+e)=u+e\neq 0.$
In fact, $v\notin \ann_l(I)$ as $vf(u+e)=f+fe\neq 0.$ Since $\ann_l(I)$ is graded by Proposition \ref{annihilators_are_graded} and $\ann_l(I)\cap E^0=\emptyset$, we have that $\ann_l(I)=\{0\}.$ \label{example_not_graded}
\end{example}

\subsection{The operator  \texorpdfstring{$({^{^{}}\hskip.2cm})^{\bot\bot}$}{TEXT}}
Next, we show that the operator $({^{^{}}\hskip.2cm})^{\bot\bot}$ is a closure operator on the set of admissible pairs of $E.$  

\begin{proposition} 
Considered as an operator on the set of admissible pairs of $E,$ the operator $({^{^{^{}}}\hskip.2cm})^\bot$ is decreasing (i.e. $(H, S)\leq (G, T)$ implies $(H^\bot, S^\bot)\geq (G^\bot, T^\bot)$ for any two admissible pairs $(H,S)$ and $(G,T)).$ 

The operator $({^{^{}}\hskip.2cm})^{\bot\bot}$ is the closure operator on the set of admissible pairs, i.e. the following properties hold for any two admissible pairs $(H,S)$ and $(G,T).$
\begin{enumerate}[\upshape(1)]
\item $({^{^{}}\hskip.2cm})^{\bot\bot}$ is extensive: $(H,S)\leq (H^{\bot\bot}, S^{\bot\bot}).$
\item $({^{^{}}\hskip.2cm})^{\bot\bot}$ is monotone: $(H, S)\leq (G, T)$ implies  $(H^{\bot\bot}, S^{\bot\bot})\leq(G^{\bot\bot}, T^{\bot\bot}).$
\item $({^{^{}}\hskip.2cm})^{\bot\bot}$ is idempotent: $(H^{\bot\bot\bot}, S^{\bot\bot\bot})=(H^\bot, S^\bot).$
\end{enumerate} 
\label{closure_on_admissible}
\end{proposition}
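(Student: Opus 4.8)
The plan is to avoid any direct combinatorial argument about roots and breaking vertices, and instead transport all the statements through the lattice isomorphism $(H,S)\mapsto I(H,S)$ between the lattice of admissible pairs of $E$ and the lattice of graded ideals of $L_K(E)$, reducing them to standard properties of the double annihilator. The key observation is that, by Proposition \ref{annihilator_of_graded_ideal}, this isomorphism carries the map $(H,S)\mapsto (H^\bot,S^\bot)$ to the map $I\mapsto \ann(I)$ on graded ideals. In particular, for every admissible pair $(H,S)$ the ideal $\ann(I(H,S))=I(H^\bot,S^\bot)$ is again graded and corresponds to the admissible pair $(H^\bot,S^\bot)$; iterating, $(H^{\bot\bot},S^{\bot\bot})$ is a well-defined admissible pair corresponding to $\ann(\ann(I(H,S)))$, so that the isomorphism carries $(H,S)\mapsto (H^{\bot\bot},S^{\bot\bot})$ to $I\mapsto \ann(\ann(I))$.

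For the decreasing property I would first record the elementary fact that $\ann$ is order-reversing: if $I\subseteq J$ are ideals, then any $r$ with $rJ=Jr=0$ also satisfies $rI=Ir=0$, so $\ann(J)\subseteq \ann(I)$. Now suppose $(H,S)\leq (G,T)$. Since $(H,S)\mapsto I(H,S)$ is an order isomorphism, this gives $I(H,S)\subseteq I(G,T)$, hence $\ann(I(G,T))\subseteq \ann(I(H,S))$, and Proposition \ref{annihilator_of_graded_ideal} rewrites this as $I(G^\bot,T^\bot)\subseteq I(H^\bot,S^\bot)$. Applying the inverse isomorphism yields $(G^\bot,T^\bot)\leq (H^\bot,S^\bot)$, which is exactly the asserted decreasing property.

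The three closure-operator properties then follow by the same transport. Monotonicity is immediate: applying the decreasing property twice to $(H,S)\leq (G,T)$ gives first $(G^\bot,T^\bot)\leq (H^\bot,S^\bot)$ and then $(H^{\bot\bot},S^{\bot\bot})\leq (G^{\bot\bot},T^{\bot\bot})$. For extensiveness and idempotence I would invoke the corresponding properties of $\ann\ann$ recorded in the introduction (which hold for all ideals, hence for graded ones). Extensiveness of $\ann\ann$ gives $I(H,S)\subseteq \ann(\ann(I(H,S)))=I(H^{\bot\bot},S^{\bot\bot})$, which translates to $(H,S)\leq (H^{\bot\bot},S^{\bot\bot})$; and idempotence of $\ann\ann$ gives $\ann(\ann(\ann(I(H,S))))=\ann(I(H,S))$, i.e. $I(H^{\bot\bot\bot},S^{\bot\bot\bot})=I(H^\bot,S^\bot)$, which translates to $(H^{\bot\bot\bot},S^{\bot\bot\bot})=(H^\bot,S^\bot)$.

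I do not expect a genuine obstacle here, since every step is a translation of a standard fact about annihilators; the only points requiring care are bookkeeping ones. I must make sure the correspondence is used as an order isomorphism in both directions, so that inclusions of graded ideals and inequalities of admissible pairs are freely interchangeable, and that $(H^\bot,S^\bot)$ is genuinely admissible at each stage (which holds because $H^\bot$ is hereditary and saturated and $S^\bot=B_{H^\bot}-S\subseteq B_{H^\bot}$), so that Proposition \ref{annihilator_of_graded_ideal} applies to it. A direct combinatorial proof of the decreasing property, tracking how the breaking vertices of $H^\bot$ compare with those of $G^\bot$, is possible but messier, and it is precisely this $S$-component bookkeeping that the annihilator translation lets me bypass.
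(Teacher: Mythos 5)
Your proof is correct, and it takes a genuinely different route from the paper's. The paper argues entirely combinatorially on the graph: it shows $H\subseteq G$ implies $R(H)\subseteq R(G)$, hence $G^\bot\subseteq H^\bot$, and then does a breaking-vertex case analysis to get $T^\bot\subseteq H^\bot\cup S^\bot$ from $S\subseteq G\cup T$; extensiveness (1) is likewise proved vertex-by-vertex (with a second case analysis showing $S\subseteq S^{\bot\bot}\cup H^{\bot\bot}$), and only (2) and (3) are deduced formally from the decreasing property and (1). You instead transport everything through the lattice isomorphism $(H,S)\mapsto I(H,S)$ and Proposition \ref{annihilator_of_graded_ideal}, reducing all the claims to the Galois-connection properties of $\ann$ recorded in the introduction. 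This is legitimate and non-circular: Proposition \ref{annihilator_of_graded_ideal} precedes Proposition \ref{closure_on_admissible} in the paper and is proved without reference to it, and the ingredients you need --- order-reversal of $\ann$, extensiveness and idempotence of $\ann\ann$, the order isomorphism between admissible pairs and graded ideals, and admissibility of $(H^\bot,S^\bot)$ at each stage --- are all available, with the last point correctly flagged and settled in your write-up. What your route buys is exactly what you claim: the $S$-component bookkeeping disappears, and the whole proposition becomes formal. What the paper's route buys is that the proposition remains a statement proved purely about the graph, with no detour through $L_K(E)$; this is a minor aesthetic point (any field would do, and the result is field-independent either way), but it keeps the closure-operator structure on admissible pairs intrinsic to $E$ rather than inherited from the algebra, and it is the combinatorial content of that proof (e.g.\ the description of when breaking vertices land in $S^\bot$) that gets reused in later arguments such as Lemma \ref{lemma_for_reflexive} and Proposition \ref{proposition_on_reflexive}.
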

\begin{proof}
To show the first sentence of the proposition, note that $H\subseteq G$ implies that $R(H)\subseteq R(G)$ and so $H^\bot=E^0-R(H)\supseteq E^0-R(G)=G^\bot.$ 
Next, we show that $H\subseteq G$ and $S\subseteq G\cup T$ imply $T^\bot\subseteq H^\bot\cup S^\bot.$
If $v\in T^\bot,$ then $v$ emits infinitely many edges with ranges in $G^\bot\subseteq H^\bot$ and nonzero and finitely many with ranges in $R(G).$ 
We consider the case when none of those with ranges in $R(G)$ are in $R(H)$ and the case when some (hence only finitely many) of those with ranges in $R(G)$ are in $R(H).$  In the first case, each edge $v$ emits has the range in $H^\bot,$ so $v$ is in $H^\bot$ also (otherwise at least one edge $v$ emits would have to have the range in $R(H)$). Hence, $v\in H^\bot\cup S^\bot.$ In the second case, $v$ is in $B_{H^\bot}.$ Note that $v\in T^\bot\subseteq B_{G^\bot}$ implies that $v$ is not in $G$ and, as $v\in T^\bot,$ 
$v$ is not in $T.$ The relation $S\subseteq K\cup T$ implies that $v\notin S.$ So, $v$ is in $B_{H^\bot}-S=S^\bot\subseteq H^\bot\cup S^\bot.$ 

This shows that  $({^{^{^{}}}\hskip.2cm})^\bot$ is decreasing and implies that (2) holds. 

To show (1), let $v\in H.$ Since $H$ is hereditary, this implies that $v\notin R(E^0-R(H)),$ so $v\in E^0-R(E^0-R(H))=H^{\bot\bot}.$ Next, we show that $S\subseteq S^{\bot\bot}\cup H^{\bot\bot}.$
Let $v\in S.$ This implies that $v\notin S^\bot.$ Since $v\in B_H,$ $v$ emits infinitely many edges with ranges in $H\subseteq H^{\bot\bot},$ and nonzero and finitely many with ranges in $E^0-H.$ We consider the case when none of those with ranges in $E^0-H$ are in $E^0-H^{\bot\bot}=R(H^\bot)$ and the case when some (hence only finitely many) of those with ranges in $E^0-H$ are in $R(H^\bot).$ In the first case, each edge $v$ emits has the range in $H^{\bot\bot}.$ This implies that $v\in H^{\bot\bot}$ (otherwise at least one edge $v$ emits would have to have a range in $R(H^\bot)$). So, $v\in H^{\bot\bot}\subseteq S^{\bot\bot}\cup H^{\bot\bot}.$ In the second case,  $v\in B_{H^{\bot\bot}},$ so $v\in  B_{H^{\bot\bot}}-S^\bot=S^{\bot\bot}\subseteq S^{\bot\bot}\cup H^{\bot\bot}.$

To show (3), note that (1) implies that 
$(H^\bot, S^\bot)\leq (H^{\bot\bot\bot}, S^{\bot\bot\bot}).$ Taking $({^{^{^{}}}\hskip.2cm})^\bot$ of the relation in (1) and using that $({^{^{^{}}}\hskip.2cm})^\bot$ is decreasing imply that 
$(H^\bot, S^\bot)\geq (H^{\bot\bot\bot}, S^{\bot\bot\bot}).$  
\end{proof}

We note the following property of $({^{^{}}\hskip.2cm})^{\bot\bot}.$

\begin{proposition}
If $H\subseteq E^0$ is a hereditary and saturated set, then $H^{\bot\bot}$ is the largest hereditary set in $R(H).$ Thus, $R(H)$ is hereditary if and only if $R(H)=H^{\bot\bot}.$ 
\end{proposition}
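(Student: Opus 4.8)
The plan is to prove two containments that pin down $H^{\bot\bot}$ among the hereditary subsets of $R(H)$, and then to read off the stated equivalence formally.

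First I would record the two easy facts. Since $H^{\bot\bot}=(H^\bot)^\bot$ is of the form $V^\bot$, it is hereditary (this was already observed for $V^\bot$ right after its definition). Next I would check that $H^{\bot\bot}\subseteq R(H)$: by definition $H^{\bot\bot}=E^0-R(H^\bot)$, and since every vertex reaches itself we have $H^\bot\subseteq R(H^\bot)$, so $E^0-R(H^\bot)\subseteq E^0-H^\bot=R(H)$. Thus $H^{\bot\bot}$ is a hereditary set contained in $R(H)$, which is half of the claim.

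The heart of the argument is maximality: I would show that any hereditary set $G\subseteq R(H)$ satisfies $G\subseteq H^{\bot\bot}$. Because $H^{\bot\bot}=E^0-R(H^\bot)$, the inclusion $G\subseteq H^{\bot\bot}$ is equivalent to $G\cap R(H^\bot)=\emptyset$, i.e. to the statement that no vertex of $G$ can reach a vertex of $H^\bot=E^0-R(H)$. Suppose for contradiction that $v\in G$ and $v\geq w$ for some $w\in E^0-R(H)$. Since $G$ is hereditary and $v\in G$, the vertex $w$, being the range of a path with source $v$, also lies in $G\subseteq R(H)$, contradicting $w\in E^0-R(H)$. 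Hence $G\subseteq H^{\bot\bot}$, and $H^{\bot\bot}$ is the largest hereditary set in $R(H)$.

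Finally, the equivalence is purely formal. If $R(H)=H^{\bot\bot}$, then $R(H)$ is hereditary because $H^{\bot\bot}$ is; conversely, if $R(H)$ is hereditary, then it is a hereditary subset of itself, so maximality gives $R(H)\subseteq H^{\bot\bot}$, while the first containment gives $H^{\bot\bot}\subseteq R(H)$, whence $R(H)=H^{\bot\bot}$. I expect the only subtle point to be unwinding the double complement $H^{\bot\bot}=E^0-R(E^0-R(H))$ and recognizing that membership in $H^{\bot\bot}$ means exactly the inability to reach outside $R(H)$; once that translation is in place, the hereditary property closes the argument in one line. Notably, saturation of $H$ is not used here—only that $H^{\bot\bot}$ is hereditary and that the root operator is extensive and monotone.
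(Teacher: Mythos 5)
Your proof is correct and follows essentially the same route as the paper: both establish that $H^{\bot\bot}=E^0-R(H^\bot)$ is hereditary and contained in $R(H)$, and both prove maximality by the same contradiction — a vertex of a hereditary $G\subseteq R(H)$ reaching $H^\bot$ would, by heredity, put a vertex of $H^\bot=E^0-R(H)$ inside $G\subseteq R(H)$. Your phrasing of maximality (every hereditary subset of $R(H)$ lies in $H^{\bot\bot}$, rather than the paper's "no hereditary set strictly between $H^{\bot\bot}$ and $R(H)$") is a cosmetic difference only, and your observation that saturation of $H$ is never used is accurate.
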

\begin{proof}
The set $H^{\bot\bot}$ is hereditary (and saturated). It is in $R(H)$ since $H^{\bot\bot}$ and $H^\bot$ are disjoint so $H^{\bot\bot}\subseteq E^0-H^\bot=R(H).$ 
If $G$ is a hereditary set such that $H^{\bot\bot}\subseteq G
\subseteq R(H),$ we claim that $G=H^{\bot\bot}.$ Assume, on the contrary, that there is $v\in G$ such that $v\notin H^{\bot\bot}=E^0-R(H^\bot).$ Hence, $v\in R(H^\bot).$ If $p$ is a path from $v$ to a vertex of $H^\bot,$ then $\ra(p)\in H^\bot\cap G$ since $G$ is hereditary. Thus, $\ra(p)\in H^\bot\cap G\subseteq H^\bot\cap R(H)=E^0-R(H)\cap R(H)=\emptyset$ which is a contradiction.   
\end{proof}

One can easily construct an example of a hereditary and saturated set $H$ such that $R(H)$ is not hereditary. For example, if $E$ is the graph $\xymatrix{\bullet^u & \bullet^v\ar[r]\ar[l] &\bullet^w}$ and $H=\{u\},$ then $R(H)=\{u,v\}$ is not hereditary. Note that $R(H^\bot)=\{v,w\}$ and $H^{\bot\bot}=H=\{u\}\subsetneq R(H).$

\subsection{Reflexive admissible pairs}
We say that an admissible pair $(H,S)$ is {\em reflexive} if $(H,S)=(H^{\bot\bot}, S^{\bot\bot}).$ 
By Proposition \ref{annihilator_of_graded_ideal}, a graded ideal $I=I(H,S)$ is an annihilator ideal if and only if $(H,S)$ is reflexive. In Proposition \ref{proposition_on_reflexive}, we show another  requirement for $(H,S)$ equivalent with $(H,S)$ being reflexive. If $E$ is a row-finite graph, 
\cite[Corollary 4.3]{Malazani} lists another condition similar (and equivalent) to part (3) of Proposition \ref{proposition_on_reflexive}. We show the following lemma before proving Proposition \ref{proposition_on_reflexive}. 

\begin{lemma}
If $(H,S)$ is an admissible pair of $E$ such that $H=H^{\bot\bot}$, then $S^{\bot\bot}=B_H.$  
\label{lemma_for_reflexive}
\end{lemma}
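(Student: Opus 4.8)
The plan is to unwind the double $\bot$ on the second coordinate and reduce the claim to a disjointness statement about breaking vertices. By the definition of the $\bot$ operator on admissible pairs (as computed in Proposition \ref{annihilator_of_graded_ideal}), we have $S^\bot = B_{H^\bot} - S,$ and applying the operator once more gives $S^{\bot\bot} = B_{H^{\bot\bot}} - S^\bot = B_{H^{\bot\bot}} - (B_{H^\bot} - S).$ The hypothesis $H = H^{\bot\bot}$ immediately yields $B_{H^{\bot\bot}} = B_H,$ so the desired identity $S^{\bot\bot} = B_H$ becomes $B_H - (B_{H^\bot} - S) = B_H.$ This holds precisely when $B_H \cap (B_{H^\bot} - S) = \emptyset,$ for which it suffices to establish the stronger disjointness $B_H \cap B_{H^\bot} = \emptyset,$ since $B_{H^\bot} - S \subseteq B_{H^\bot}.$

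The key step, and the heart of the argument, is this disjointness. I would argue by contradiction: suppose some vertex $v$ lies in both $B_H$ and $B_{H^\bot}.$ Since $v \in B_H,$ it is an infinite emitter that emits only finitely many edges into $E^0 - H,$ and hence emits infinitely many edges into $H.$ Because $H \subseteq R(H) = E^0 - H^\bot,$ this means $v$ emits infinitely many edges into $E^0 - H^\bot.$ On the other hand, $v \in B_{H^\bot}$ forces $\so^{-1}(v) \cap \ra^{-1}(E^0 - H^\bot)$ to be finite, i.e. $v$ emits only finitely many edges into $E^0 - H^\bot.$ These two conclusions contradict each other, so $B_H \cap B_{H^\bot} = \emptyset.$

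With the disjointness in hand, the final step is purely set-theoretic: removing a subset of $B_{H^\bot}$ from a set disjoint from $B_{H^\bot}$ removes nothing, so $S^{\bot\bot} = B_H - (B_{H^\bot} - S) = B_H,$ as claimed.

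I do not expect a serious obstacle here. The only point requiring care is the bookkeeping in expanding $S^{\bot\bot}$ through the nested set differences and confirming that the hypothesis $H = H^{\bot\bot}$ is used exactly to replace $B_{H^{\bot\bot}}$ by $B_H;$ the assumption plays no role beyond this substitution. The substantive content is the observation that a breaking vertex of $H$ sends infinitely many edges into $H \subseteq R(H),$ whereas a breaking vertex of $H^\bot$ sends only finitely many edges into $R(H),$ which makes the two sets of breaking vertices incompatible.
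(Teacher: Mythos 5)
Your proof is correct and follows essentially the same route as the paper: expand $S^{\bot\bot}=B_{H^{\bot\bot}}-S^\bot=B_H-(B_{H^\bot}-S)$ using the hypothesis, and conclude from the disjointness $B_H\cap B_{H^\bot}=\emptyset.$ The only difference is that the paper dismisses this disjointness as holding ``by definition of breaking vertices,'' whereas you spell out the (correct) argument that a vertex of $B_H$ emits infinitely many edges into $H\subseteq R(H)=E^0-H^\bot,$ which is incompatible with membership in $B_{H^\bot}.$
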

\begin{proof}
Since  $H=H^{\bot\bot},$ we have that $S^{\bot\bot}=B_{H^{\bot\bot}}-S^\bot=B_H-(B_{H^\bot}-S)=(B_H-B_{H^\bot})\cup (B_H\cap S)=(B_H-B_{H^\bot})\cup S.$ Since $B_H\subseteq E^0-B_{H^\bot}$ by definition of breaking vertices, we have that $B_H-B_{H^\bot}=B_H.$ Hence, 
$S^{\bot\bot}=(B_H-B_{H^\bot})\cup S=B_H\cup S=B_H.$
\end{proof}

\begin{proposition}
The following conditions are equivalent for an admissible pair $(H,S)$ of $E.$ 
\begin{enumerate}[\upshape(1)]
\item The ideal $I(H,S)$ is an annihilator ideal of $L_K(E).$ 
 
\item $(H,S)$ is reflexive. 

\item $R(H)-H\subseteq R(H^\bot)$ and $S=B_H.$  
\end{enumerate}
\label{proposition_on_reflexive}
\end{proposition}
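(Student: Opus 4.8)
The plan is to dispatch the equivalence (1) $\Leftrightarrow$ (2) immediately and then prove (2) $\Leftrightarrow$ (3) by treating the hereditary-saturated component $H$ and the breaking-vertex component $S$ separately.

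For (1) $\Leftrightarrow$ (2): since $I(H,S)$ is graded, two applications of Proposition \ref{annihilator_of_graded_ideal} give $\ann(\ann(I(H,S)))=I(H^{\bot\bot},S^{\bot\bot})$. Because $(H,S)\mapsto I(H,S)$ is a bijection between admissible pairs and graded ideals, the statement that $I(H,S)$ is an annihilator ideal (that is, $I(H,S)=\ann(\ann(I(H,S)))$) is equivalent to $(H,S)=(H^{\bot\bot},S^{\bot\bot})$, which is exactly reflexivity. This is already noted in the text preceding the statement, so only a sentence is needed.

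The heart of the argument is the equivalence $H=H^{\bot\bot}\Leftrightarrow R(H)-H\subseteq R(H^\bot)$. From extensivity and the definition one has the chain $H\subseteq H^{\bot\bot}=E^0-R(H^\bot)\subseteq R(H)$, so $H=H^{\bot\bot}$ is equivalent to $E^0-H\subseteq R(H^\bot)$. I would next observe that $R(H^\bot)\cap H=\emptyset$: a vertex of the hereditary set $H$ reaches only vertices of $H\subseteq R(H)$, hence cannot reach any vertex of $H^\bot=E^0-R(H)$. Consequently $R(H^\bot)\subseteq E^0-H$ always holds, so the inclusion $E^0-H\subseteq R(H^\bot)$ is in fact the equality $R(H^\bot)=E^0-H$. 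Finally, decomposing $E^0-H=(R(H)-H)\cup H^\bot$ with $H^\bot\subseteq R(H^\bot)$ automatic, the inclusion $E^0-H\subseteq R(H^\bot)$ reduces precisely to $R(H)-H\subseteq R(H^\bot)$.

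The component $S$ then follows from Lemma \ref{lemma_for_reflexive}. Under the hypothesis $H=H^{\bot\bot}$ (equivalently, the first clause of (3)), the lemma gives $S^{\bot\bot}=B_H$, so $S=S^{\bot\bot}$ holds if and only if $S=B_H$. Combining, reflexivity --- which is $H=H^{\bot\bot}$ together with $S=S^{\bot\bot}$ --- is equivalent to $R(H)-H\subseteq R(H^\bot)$ together with $S=B_H$, i.e.\ condition (3). The only real obstacle is the set-level equivalence for $H$; the step requiring care is justifying $R(H^\bot)\cap H=\emptyset$, where hereditariness of $H$ is used to upgrade the containment $E^0-H\subseteq R(H^\bot)$ to an equality, and everything else is routine bookkeeping with the closure operator from Proposition \ref{closure_on_admissible}.
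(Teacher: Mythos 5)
Your proposal is correct and takes essentially the same route as the paper's proof: (1) $\Leftrightarrow$ (2) via Proposition \ref{annihilator_of_graded_ideal} together with the correspondence between admissible pairs and graded ideals, and (2) $\Leftrightarrow$ (3) by reducing the $H$-component to the identity $H^{\bot\bot}=E^0-R(H^\bot)$ plus extensivity (your observation $R(H^\bot)\cap H=\emptyset$ is just extensivity restated), then settling the $S$-component with Lemma \ref{lemma_for_reflexive}. The only difference is presentational: you argue with set-level biconditionals where the paper chases elements in each direction.
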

\begin{proof}
The first two conditions are equivalent by Propositions \ref{annihilator_of_graded_ideal} and \ref{closure_on_admissible}. 

If (2) holds and if $v\in R(H)-H,$ then $v\notin H=H^{\bot\bot}=E^0-R(H^\bot).$ Hence, $v\in R(H^\bot).$ Condition (2) and Lemma \ref{lemma_for_reflexive} imply that $S=S^{\bot\bot}=B_H$ which shows (3). 

If (3) holds and $v\in H^{\bot\bot},$ then $v\notin R(H^\bot)$ and so $v\notin R(H)-H.$ As $v\notin R(H^\bot),$ $v\notin H^\bot=E^0-R(H)$ which implies that $v\in R(H).$ So, we have that $v\in R(H)$ and $v\notin R(H)-H$ thus $v\in H.$ Hence,  $H=H^{\bot\bot}.$ By Lemma \ref{lemma_for_reflexive},
$S^{\bot\bot}=B_H$ and so $S^{\bot\bot}=B_H=S.$ Hence, (2) holds.  
\end{proof}

Before moving on to the main result, we digress to show Proposition \ref{proposition_condition_L}. This proposition is formulated so that it is a statement on the graph $E$ only, not on an annihilator ideal of $L_K(E)$ or $C^*(E).$ Because of this, it implies both \cite[Proposition 3.10]{Goncalves_Royer_regular_ideals} and \cite[Corollary 3.8]{Brown_et_al} and shows that these results hold without requiring $E$ to be row-finite. 
 
\begin{proposition}
If $E$ satisfies Condition (L) and $(H, S)$ is a reflexive admissible pair of $E,$ then $E/(H,S)$ satisfies Condition (L).   
\label{proposition_condition_L}
\end{proposition}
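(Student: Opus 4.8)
The plan is to exploit reflexivity to reduce to cycles of $E$ lying outside $H$, and then to split into two cases according to whether such a cycle meets $R(H)$.

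By part (3) of Proposition \ref{proposition_on_reflexive}, reflexivity of $(H,S)$ forces $S=B_H$, so $B_H-S=\emptyset$ and the quotient $E/(H,S)$ acquires no primed vertices or edges: its vertices are $E^0-H$ and its edges are exactly those $e\in E^1$ with $\ra(e)\notin H$. As $H$ is hereditary, an edge whose range avoids $H$ also has source outside $H$; hence a cycle $c$ of $E/(H,S)$ is nothing but a cycle of $E$ with $c^0\subseteq E^0-H$, carrying the same edges in $E$ and in the quotient. I would fix such a cycle and locate an exit for it in $E/(H,S)$. The first observation is a dichotomy: since the vertices of a cycle are mutually reachable, $H^\bot$ is hereditary, and $R(H)$ is closed under being reached from (if $u\geq u'$ and $u'\in R(H)$ then $u\in R(H)$), a single vertex of $c$ lying in one of the complementary sets $R(H)$, $H^\bot=E^0-R(H)$ drags all of $c^0$ into that set. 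Thus either $c^0\subseteq H^\bot$ or $c^0\subseteq R(H)$.

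If $c^0\subseteq H^\bot$, Condition (L) supplies an exit $e$ of $c$ in $E$, i.e.\ $\so(e)\in c^0$ and $e\notin c$. Because $H^\bot$ is hereditary, $\ra(e)\in H^\bot\subseteq E^0-H$, so $e$ is an edge of $E/(H,S)$ and remains an exit there. If instead $c^0\subseteq R(H)$, then $c^0\subseteq R(H)-H\subseteq R(H^\bot)$ by Proposition \ref{proposition_on_reflexive}(3). Pick any $u\in c^0$ and a path $p=g_1\cdots g_m$ from $u$ to some $w\in H^\bot$. Since $w\in H^\bot=E^0-R(H)$ while $c^0\subseteq R(H)$, the endpoint $w$ is off $c$, so the path leaves $c$; taking $j$ least with $\ra(g_j)\notin c^0$, the edge $g_j$ has $\so(g_j)\in c^0$ and $\ra(g_j)\notin c^0$, so $g_j$ is not an edge of $c$. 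It remains to see $\ra(g_j)\notin H$: were $\ra(g_j)\in H$, heredity of $H$ would trap the tail of $p$ inside $H\subseteq R(H)$, contradicting $w\in H^\bot$. Hence $g_j$ is an edge of $E/(H,S)$ and an exit for $c$.

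I expect the second case to be the main obstacle, and within it the crux is precisely the verification that $\ra(g_j)\notin H$, i.e.\ that the edge along which a path to $H^\bot$ first leaves the cycle already escapes $H$. This is the step where reflexivity enters through the inclusion $R(H)-H\subseteq R(H^\bot)$, and it is what ensures an exit is visible after passing to the quotient; the first case uses only that $H^\bot$ is hereditary together with Condition (L).
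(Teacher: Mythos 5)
Your proof is correct and takes essentially the same route as the paper's: both reduce via Proposition \ref{proposition_on_reflexive} to $S=B_H$ (so the quotient adds no new vertices or edges) and then exploit the reflexivity inclusion $R(H)-H\subseteq R(H^\bot)$. The only difference is organizational: the paper argues by contradiction (a cycle $c$ with no exits in the quotient would have all exit ranges in $H$, hence $T(c^0)\subseteq R(H)$ and $c^0\cap R(H^\bot)=\emptyset$, contradicting reflexivity), whereas you run the contrapositive by constructing an explicit exit surviving in the quotient, with an additional easy case $c^0\subseteq H^\bot$ that the paper's contradiction hypothesis rules out automatically.
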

\begin{proof}
Assume that $E$ has Condition (L), that $(H,S)$ is reflexive, and that $c$ is a cycle without exits in $E/(H,S).$ By Proposition \ref{proposition_on_reflexive}, $S=B_H,$ so there are no vertices added to form the quotient graph. Hence,  $(E/(H,S))^0=E^0-H$ and $c$ is a cycle in $E$ with vertices in $E^0-H.$ As $E$ has Condition (L), $c$ has an exit. Since $c$ has no exits in $E/(H,S),$ the range of any exit of $c$ is in $H.$ This implies that $c^0\subseteq R(H)\cap (E^0-H)=R(H)-H$ and that 
$T(c^0)\subseteq R(H).$ This last relation implies that no vertex of $c$ is in $R(H^\bot).$ On the other hand, by Proposition \ref{proposition_on_reflexive}, $R(H)-H\subseteq R(H^\bot),$ so $c^0\subseteq R(H)-H\subseteq R(H^\bot).$ Thus, we reach a contradiction.
\end{proof}

If $H=\{v\}$ for the graph $\;\;\;\;\xymatrix{\bullet\ar@(lu,ld)  \ar[r] & \bullet^v},$ then  Condition (L) holds on $E$ and fails on $E/H.$  Proposition \ref{proposition_condition_L} shows that this cannot happen if the quotient is taken with respect to a reflexive admissible pair. \cite[Example 3.9]{Brown_et_al} exhibits a row-finite graph $E$ with Condition (L) and a hereditary and saturated set $H$ such that $H$ is not reflexive and $E/H$ satisfies Condition (L).  

\subsection{All-reflexive graphs}
The following example displays three graphs and their  admissible pairs which are not reflexive. 

\begin{example}  
Let $E$ be the graph  $\;\;\;\;\xymatrix{{\bullet}^u\ar@(lu,ld)  \ar[r] & {\bullet}^v}.$ If $H=\{v\},$ then $R(H)=E^0,$ so $R(H)-H=\{u\}\nsubseteq R(H^\bot)=R(\emptyset)=\emptyset.$ Hence, $(H, \emptyset)$ is not reflexive by Proposition \ref{proposition_on_reflexive}.  

Let $E$ be the graph $\xymatrix{\bullet^u \ar@{.} @/_1pc/ [r] _{\mbox{ } } \ar@/_/ [r] \ar [r] \ar@/^/ [r] \ar@/^1pc/ [r] & {\bullet}^v}.$ If $H=\{v\},$ then $B_H=\emptyset$ and $R(H)=E^0,$ so $R(H)-H=\{u\}\nsubseteq R(H^\bot)=R(\emptyset)=\emptyset.$ Hence, $(H,\emptyset)$ is not reflexive by Proposition \ref{proposition_on_reflexive}. 

Lastly, let $E$ be the graph $\xymatrix{   \bullet     & \bullet     & \bullet  &   & \\   \bullet \ar[r]\ar[u] & \bullet \ar[r]  \ar[u] & \bullet \ar[r]\ar[u] & \bullet \ar@{.>}[r] \ar@{.>}[u] &}$ 
and let $H$ be the union of all sinks of $E.$ As $R(H)=E^0,$ $R(H)-H\neq \emptyset$ and  $R(H^\bot)=R(\emptyset)=\emptyset.$ Hence, $(H, \emptyset)$ is not reflexive. 
\label{example_not_reflexive}
\end{example}

Theorem \ref{theorem_char_of_reflexive} states the necessary and sufficient condition on $E$ for each graded ideal of $L_K(E)$ to be an annihilator ideal. It shows that the three graphs from the previous example have a complete list of features which obstruct all admissible pairs from being reflexive: a cycle with exits which is not extreme, an infinite emitter which is not on an extreme cycle, and an infinite path with infinitely many bifurcations not connecting back to the path. 

Before proving the theorem, we show a lemma. 
\begin{lemma}
Let $E$ be a graph such that each cycle is either without exits or extreme and such that each infinite emitter is on a cycle. If $H\subseteq E^0$ is  hereditary and saturated, then no vertex of $R(H)-H$ is on a cycle and each vertex of $R(H)-H$ is regular. Hence, if $(H,S)$ is an admissible pair, then $B_H=S=\emptyset.$   
\label{lemma_on_RH_bez_H} 
\end{lemma}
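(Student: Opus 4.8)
The plan is to establish the central claim first --- that no vertex of $R(H)-H$ lies on a cycle --- since both the regularity statement and the conclusion $B_H=S=\emptyset$ follow quickly from it. A useful preliminary observation I would record is that if a single vertex of a cycle $c$ belongs to $H$, then all of $c^0$ does: the vertices of a cycle each reach one another by traversing the cycle, so heredity of $H$ propagates membership around the entire cycle. I will invoke this repeatedly.

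For the central claim I would argue by contradiction. Suppose $v\in R(H)-H$ lies on a cycle $c$, and fix a path $p$ with $\so(p)=v$ and $\ra(p)\in H$; since $v\notin H$, this path has positive length. The hypothesis splits into two cases according to whether $c$ has exits. If $c$ has no exits, then no vertex of $c$ emits an edge leaving $c$, so every path starting at $v$ stays within $c^0$; in particular $\ra(p)\in c^0\cap H$, and the preliminary observation gives $c^0\subseteq H$, contradicting $v\notin H$.

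If $c$ has exits, the hypothesis forces $c$ to be extreme. Applying the defining property of an extreme cycle to the path $p$ (whose source $v$ lies on $c$), I obtain a path $q$ with $\so(q)=\ra(p)\in H$ and $\ra(q)\in c^0$. Heredity of $H$ then puts $\ra(q)\in c^0\cap H$, and the preliminary observation again yields $c^0\subseteq H$, so $v\in H$ --- a contradiction. This case, where the ``every exit returns'' condition has to be reconciled with heredity of $H$, is the main point of the argument; everything else is bookkeeping.

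Once the central claim is in hand, the remaining assertions are immediate. A vertex $v\in R(H)-H$ is not a sink, because $v\in R(H)$ together with $v\notin H$ forces a positive-length path out of $v$; and it is not an infinite emitter, because every infinite emitter lies on a cycle by hypothesis while no vertex of $R(H)-H$ does. Hence every such vertex is regular. Finally, any $v\in B_H$ is an infinite emitter lying outside $H$ that emits only finitely many edges into $E^0-H$, so it emits edges into $H$ and thus lies in $R(H)-H$; this contradicts the regularity just established, whence $B_H=\emptyset$, and $S\subseteq B_H$ gives $S=\emptyset$.
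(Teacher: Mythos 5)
Your proof is correct and follows essentially the same route as the paper's: both reduce to the extreme-cycle case (you by an explicit case split on exits, the paper by noting the cycle must have an exit to reach $H$), then use the ``every exit returns'' property together with heredity of $H$ to force $c^0\subseteq H$ and reach a contradiction. The regularity and $B_H=S=\emptyset$ deductions also match the paper's argument.
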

\begin{proof}
With the assumptions on $E$ of the lemma, assume that a vertex $v$ of $R(H)-H$ is on a cycle $c$. As $H$ is hereditary, $c^0\subseteq R(H)-H$ implies that $c$ has to have an exit. By the assumptions on $E,$ $c$ is extreme. As the vertices in $c^0$ connect only to vertices on the closed paths containing some vertex of $c$ and one of them connects to a hereditary set $H,$ we have to have that $c^0$ is in $H,$ contradicting the assumption that $v\in R(H)-H.$  

If $v$ is an infinite emitter of $R(H),$ it is on a cycle, so it cannot be in $R(H)-H.$ In addition, a vertex $v$ of $R(H)-H$ connects to $H,$ so there is a path originating at $v.$ Thus, $v$ is not a sink. Hence, each vertex of $R(H)-H$ is regular. 

To show the last claim, assume that $(H,S)$ is an admissible pair and that a vertex $v$ is in $B_H.$ This implies that $v\in R(H)-H,$ so $v$ cannot be an infinite emitter. Thus, $B_H=\emptyset$ and so $S=\emptyset.$ 
\end{proof}

\begin{theorem}
The following conditions are equivalent for any graph $E$ and any field $K.$ 
\begin{enumerate}[\upshape(1)]
\item Each graded ideal of $L_K(E)$ is an annihilator ideal. 

\item The following conditions hold for $E.$
\begin{enumerate}[\upshape(a)]
\item Each cycle in $E$ is either without exits or extreme. 
\item Each infinite emitter is on a cycle.  
\item Each infinite path $\alpha$ has only finitely many bifurcations with ranges not in the root of $\alpha^0$ (i.e. $\ra(e)\notin R(\alpha^0)$ for only finitely many edges $e$ with $\so(e)\in \alpha^0$). 
\end{enumerate}
\end{enumerate}
\label{theorem_char_of_reflexive} 
\end{theorem}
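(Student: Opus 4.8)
The plan is to prove the two implications separately, using Proposition \ref{proposition_on_reflexive} to translate condition (1) into the purely graph-theoretic statement that every admissible pair $(H,S)$ satisfies $R(H)-H\subseteq R(H^\bot)$ and $S=B_H$.

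For the implication (2)$\Rightarrow$(1) I would argue as follows. Assume (a), (b), (c). By Lemma \ref{lemma_on_RH_bez_H}, conditions (a) and (b) force $B_H=\emptyset$ for every hereditary and saturated $H$, so that $S=B_H=\emptyset$ holds automatically and, in addition, every vertex of $R(H)-H$ is regular and lies on no cycle. Hence by Proposition \ref{proposition_on_reflexive} it suffices to show $R(H)-H\subseteq R(H^\bot)$ for every $H$. Suppose not: then there is a vertex $v\in R(H)-H$ with $T(v)\subseteq R(H)$, that is, $v\in H^{\bot\bot}-H$. Writing $D=H^{\bot\bot}-H\subseteq R(H)-H$, every vertex of $D$ is regular, acyclic, lies in $R(H)$ (so reaches $H$), and, being a regular vertex outside the saturated set $H$, emits at least one edge to another vertex of $D$. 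I would then build an infinite path $\gamma$ inside $D$: starting from $v$, follow a path to $H$ and stop at its last vertex $z_0$ lying outside $H$ (so $z_0\in D$ and $z_0$ emits an edge directly into $H$); at $z_0$ use saturation to step to some $x_1\in D$; from $x_1$ repeat, producing $z_1$, then $x_2$, and so on. Each $z_i$ then lies on $\gamma$ and emits an edge into $H$; since $\gamma^0\subseteq D\subseteq E^0-H$ and $H$ is hereditary, no vertex of $H$ reaches $\gamma^0$, so $H\cap R(\gamma^0)=\emptyset$ and each edge $z_i\to H$ is a bifurcation of $\gamma$ with range outside $R(\gamma^0)$. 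This produces infinitely many such bifurcations, contradicting (c).

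I expect this construction to be the main obstacle. A naively chosen infinite path in $R(H)-H$ may have all of its ``escapes to $H$'' occurring at ancestors that are not themselves on the path, in which case (c) gives no contradiction; the path must therefore be deliberately routed through vertices adjacent to $H$. The interplay of saturation (to continue inside $D$ after each $z_i$) with the reachability of $H$ (to locate a vertex adjacent to $H$) is exactly what makes the routing possible, and verifying that $\gamma$ remains an infinite path inside $D$ with infinitely many distinct escape edges is the delicate point.

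For the converse (1)$\Rightarrow$(2) I would prove the contrapositive of each of (a), (b), (c) by one uniform construction. Given a set $V$ of vertices, put $W=T(V)-R(V)$ and $H=\overline{W}$, its hereditary and saturated closure; one checks that $W$ is already hereditary, and a short induction on the stages of saturation shows that no vertex of $V$ is ever adjoined to $H$, provided each $v\in V$ is either an infinite emitter or emits an edge to another vertex of $V$. If moreover $W\neq\emptyset$ and every vertex of $V$ reaches $W$, then $V\subseteq R(H)-H$ and $T(V)\subseteq R(H)$ (each descendant of $V$ either reaches back to $V$, hence reaches $H$, or lies in $W\subseteq H$), whence $V\cap R(H^\bot)=\emptyset$ and $(H,\emptyset)$ fails to be reflexive. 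I would apply this with $V=c^0$ for a cycle $c$ that has exits but is not extreme (so $W\neq\emptyset$, and the cycle edges supply the saturation-avoidance), with $V=\{v\}$ for an infinite emitter $v$ not on a cycle (so $T(v)\cap R(v)=\{v\}$, and the infinite-emitter property supplies the saturation-avoidance), and with $V=\alpha^0$ for an infinite path $\alpha$ having infinitely many bifurcations with ranges outside $R(\alpha^0)$ (the path edges supply the saturation-avoidance, and the infinitude of escapes makes every vertex of $\alpha^0$ reach $W$). The one situation needing extra care is when $\alpha$ in (c) has a finite vertex set, i.e.\ $\alpha$ is eventually a cycle carrying an infinite emitter; there the infinitely many non-returning escapes exhibit that cycle as having exits but not being extreme, so this case already falls under the failure of (a) and is handled by the $V=c^0$ construction. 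Besides the forward routing of $\gamma$, I expect the verification that $\overline{W}$ never swallows $V$ to be the other place where the argument must be made fully precise.
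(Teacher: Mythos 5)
Your direction (2)$\Rightarrow$(1) is correct and is essentially the paper's own argument: the paper likewise routes an infinite path through vertices adjacent to $H$ (taking a minimal-length path into $H$, bifurcating at its last vertex outside $H$ via saturation, and using Lemma \ref{lemma_on_RH_bez_H} to keep everything regular and acyclic); your set $D=H^{\bot\bot}-H$ is just a convenient package for the invariants ($v_i\in R(H)-H$ and $v_i\notin R(H^\bot)$) that the paper tracks step by step. Your uniform lemma for (1)$\Rightarrow$(2), with $W=T(V)-R(V)$, $H=\overline{W}$, and saturation-avoidance coming from ``each $v\in V$ is an infinite emitter or emits into $V$,'' is also sound, and its applications to the failures of (a) and (b) match the paper's sets $V_a$ and $V_b$.

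The gap is in your treatment of the failure of (c). The claim that ``the infinitude of escapes makes every vertex of $\alpha^0$ reach $W$'' is false without further hypotheses, and your exceptional case (``$\alpha$ has a finite vertex set'') neither exhausts the failures nor is resolved the way you say. Concretely: let $u$ emit infinitely many edges to sinks $t_1,t_2,\ldots$ and one edge to $v_1$, and let $v_1\to v_2\to\cdots$ be a simple infinite path with no bifurcations; take $\alpha=uv_1v_2\cdots$. Then $\alpha^0$ is infinite and (c) fails (all escapes occur at $u$), yet $v_1$ reaches no vertex of $W=\{t_1,t_2,\ldots\}$, so the hypothesis of your uniform lemma fails; moreover one computes $R(H)-H=\{u\}\subseteq R(H^\bot)$, so the intended conclusion $V\cap R(H^\bot)=\emptyset$ is also false here (the pair $(H,\emptyset)$ is still non-reflexive, but only because $B_H=\{u\}\neq\emptyset=S$, which your argument does not establish). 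Your finite-$\alpha^0$ patch is likewise flawed: the infinite emitter carrying the escapes need not lie on the terminal cycle (replace the tail $v_1v_2\cdots$ above by a loop at $v_1$); then that cycle has no exits at all, so what fails is (b), not (a). The repair is the paper's ordering of the cases: treat the failure of (c) only under the standing assumption that (a) and (b) hold. Then no vertex of $\alpha^0$ can emit infinitely many escapes --- such a vertex would be an infinite emitter, hence by (b) on a cycle $c$ with $R(c^0)\subseteq R(\alpha^0)$, and by (a) that cycle would be extreme, forcing every escape to return --- so the escape sources are cofinal along $\alpha$, every vertex of $\alpha^0$ does reach $W$, and your uniform construction then goes through.
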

\begin{proof}
To show (1) $\Rightarrow$ (2), we show that the negation of (2) implies the existence of a non-reflexive admissible pair $(H,S)$. In that case, $I(H,S)$ is not an annihilator ideal by Proposition \ref{proposition_on_reflexive}.

If (2a) fails, there is a non-extreme cycle $c$ which has an exit with the range not in $R(c^0)$. Hence, 
\begin{center}
$V_a=\{\ra(e)\mid \so(e)$ is on a closed path that contains a vertex of $c$ and $\ra(e)\notin R(c^0)\}$
\end{center}
is not empty. Let $H$ be the hereditary and saturated closure of $V_a.$ Since the vertices of $c$ connect to $\ra(e)$ for each $e$ such that $\ra(e)\in V_a,$ we have that $c^0\subseteq R(H).$ Also, no vertex $v$ of $c$ is in $H$ since $v\in R(c^0).$ For such a vertex $v,$
the range of any path $v$ emits is either in $R(c^0)\subseteq R(H)$ or in $T(V_a)\subseteq H\subseteq R(H),$ so $v\notin R(H^\bot).$ By Proposition \ref{proposition_on_reflexive}, $(H, S)$ is not reflexive for any $S\subseteq B_H.$  

If (2b) fails and (2a) fails also, then we have a non-reflexive pair $(H,S)$ as above. Hence, assume that (2b) fails and that (2a) holds. So, there is an infinite emitter $v$ which is not on a cycle. The set $V_b=\ra(\so^{-1}(v))$ is not empty and, as $v$ is not on a cycle, $\ra(e)\notin R(v)$ for every $e\in \so^{-1}(v).$ Let $H$ be the hereditary and saturated closure of $V_b.$ Then $v\in R(H)$ since $v$ connects to $\ra(e)$ for any $e\in \so^{-1}(v).$ Also, $v\notin H$ since $v$ is not in $T(V_b)$ and $v$ is not regular (see \cite[Lemma 2.0.7]{LPA_book}). As $T(v)\subseteq H \subseteq R(H),$ $v\notin R(H^\bot).$ By Proposition \ref{proposition_on_reflexive}, $(H, S)$ is not reflexive for any $S\subseteq B_H.$ 

If (2c) fails and either (2a) or (2b) fails also, there is a non-reflexive pair obtained as above. Hence, assume that (2c) fails and that (2a) and (2b) hold. Thus, there is an infinite path $\alpha$ with infinitely many bifurcations $e$ whose ranges are not in $R(\alpha^0).$ 
Hence, the set
\begin{center}
$V_c=\{\ra(e)\mid \so(e)\in \alpha^0$ and $\ra(e)\notin R(\alpha^0)\}$
\end{center}
is not empty. Let $H$ be the hereditary and saturated closure of $V_c.$ Since (2a) and (2b) hold, we cannot have a vertex $v\in \alpha^0$ emitting infinitely many edges with ranges in $V_c.$ Hence, for every $v\in \alpha^0,$ there is $u\in \alpha^0$ such that $v\geq u$ and such that $u$ emits an edge with the range in $V_c.$ Thus,
$\alpha^0\subseteq R(H).$ Also, no vertex $v$ of $\alpha$ is in $H$ since $v\in R(\alpha^0).$ By (2a), every path that originates at $v\in \alpha^0$ ends up either in $R(\alpha^0)\subseteq R(H)$ or in $T(V_c)\subseteq H\subseteq R(H).$ So, no range of such path is in $H^\bot$ which shows that $v\notin R(H^\bot).$ Thus, we have an admissible pair $(H,S)$ which is not reflexive for any $S\subseteq B_H$ by Proposition \ref{proposition_on_reflexive}.

This shows that (1) implies (2). To show the converse, assume that $E$ is a graph such that (2) holds and let $I=I(H,S)$ be a graded ideal of $L_K(E).$ By Lemma \ref{lemma_on_RH_bez_H}, $S=\emptyset.$ By Proposition \ref{proposition_on_reflexive}, to show that $I$ is an annihilator ideal, it is sufficient to show that $R(H)-H\subseteq R(H^\bot).$ 
 
For each vertex $v\in R(H)-H,$ let $P(v)$ be the set of paths $p,$ such that $\so(p)=v,$ $\ra(p)\in H,$ and such that no other vertex of $p$ except $\ra(p)$ is in $H.$ Assume that there is a vertex $v_0$ such that $v_0\in R(H)-H$ and $v_0\notin R(H^\bot).$ 
Consider a path in $P(v_0)$ which has the minimal length. Since $v_0\notin H,$ this length is larger than zero, so we can write such path in the form $p_0e_0$ for some path $p_0$ and an edge $e_0.$ Let  $w_0=\so(e_0).$ Since $H$ is saturated, $w_0\notin H$  and $\ra(e_0)\in H,$ $w_0$ emits other edges, by Lemma \ref{lemma_on_RH_bez_H}, finitely many of them, and at least one of them, say $f_0,$ has the range in $R(H)-H.$ Let $v_1=\ra(f_0).$ By Lemma \ref{lemma_on_RH_bez_H},  there are no cycles in $R(H)-H$ and so $v_1\notin R(v_0).$ Thus, $v_0\neq v_1.$ As $v_1\in T(v_0),$ $v_1\notin R(H^\bot).$ 

Repeating the construction for $v_1$ instead of $v_0,$ we obtain a path $p_1$ and an edge $e_1$ such that $p_1e_1$ is a path in $P(v_1)$ of the minimal length. Let $w_1=\so(e_1).$ Repeating the argument for $w_0,$ we obtain $f_1\in \so^{-1}(w_1)$ with $\ra(f_1)\in R(H)-H.$ We let $v_2=\ra(f_1)$ and note that $v_2\notin R(v_0)\cup R(v_1)$ as there are no cycles in $R(H)-H.$ As $v_2\in T(v_0),$ $v_2\notin R(H^\bot).$  

Continuing this construction, we obtain an infinite path $\alpha=p_0f_0p_1f_1p_2f_2\ldots$ which has infinitely many bifurcations $e_0, e_1, e_2,\ldots$ with the ranges in $H.$ These ranges are not in $R(\alpha^0)$ since $H$ is hereditary and every vertex of $\alpha$ is in $R(H)-H.$ However, this contradicts (2c). Hence, a vertex $v_0$ in $R(H)-H$ and not in $R(H^\bot)$ cannot exist. This shows that $R(H)-H\subseteq R(H^\bot).$   
\end{proof}

By Theorem \ref{theorem_char_of_reflexive}, each admissible pair of a graph $E$ is reflexive if and only if $E$ satisfies condition (2) of Theorem \ref{theorem_char_of_reflexive}. In this case, we say that $E$ is {\em all-reflexive}.

If $E$ is finite, the requirements that $E$ is all-reflexive simplify significantly. If $E$ is finite,  then there are no infinite emitters, so condition (2b) of Theorem \ref{theorem_char_of_reflexive} is superfluous. Also, the only infinite path is the one with vertices on cycles, so condition (2a) of Theorem \ref{theorem_char_of_reflexive} implies (2c). Hence, $E$ is all-reflexive if and only if each cycle in $E$ is either without exits or extreme.

\subsection{Conditions for the lattice of ideals to be  a Boolean algebra}\label{subsection_Boolean}
Since $L_K(E) $ is a semiprime ring ($I^2=0$ implies $I=0$ for every ideal $I,$ see \cite[Proposition 2.3.1]{LPA_book}), we have that $I\cap \ann(I)=\{0\}$ for any ideal $I.$ This implies that the lattice of  annihilator ideals is a Boolean algebra (see also \cite[Proposition 4, Section 4.6]{Lambek} or \cite[page 15, Exercise 7]{Steinberg_lattice_ordered}). The meet and the join operations are given by 
\[I\wedge J=I \cap J\;\;\mbox{ and }\;\;I\vee J=\ann(\ann(I)\cap \ann(J))=\ann(\ann(I+J)).\]  

We relate the conditions that $E$ is all-reflexive with the requirement that the lattice of graded ideals of $L_K(E)$ is a Boolean algebra next. 

\begin{theorem}
The following conditions are equivalent for any graph $E$ and any field $K.$  
\begin{enumerate}[\upshape(1)]
\item The graph $E$ is all-reflexive. 
\item The lattice of graded ideals of $L_K(E)$ is a Boolean algebra. 
\end{enumerate}
\label{theorem_Boolean_graded}  
\end{theorem}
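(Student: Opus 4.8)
The plan is to route the equivalence through the intermediate fact, already supplied by Theorem \ref{theorem_char_of_reflexive}, that $E$ is all-reflexive if and only if every admissible pair of $E$ is reflexive, which by Proposition \ref{proposition_on_reflexive} is the same as saying that every graded ideal of $L_K(E)$ is an annihilator ideal. The other ingredient I would use repeatedly is Proposition \ref{annihilators_are_graded}: since $\ann(J)$ is graded for every ideal $J$, every annihilator ideal of $L_K(E)$ is automatically graded. Thus both conditions ultimately concern the same underlying set of ideals, and the whole argument reduces to matching the two lattice structures.

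For (1) $\Rightarrow$ (2), assuming $E$ is all-reflexive, every graded ideal is an annihilator ideal, and conversely every annihilator ideal is graded, so the set of graded ideals coincides with the set of annihilator ideals. I would then check that the operations agree on this common set. The meet is intersection in both lattices (the intersection of two graded ideals is graded, and the intersection of two annihilator ideals is again an annihilator ideal), while the join of two graded ideals is their sum $I+J$, which is graded and hence an annihilator ideal, so that $\ann(\ann(I+J))=I+J$ matches the join $I\vee J=\ann(\ann(I+J))$ of the annihilator lattice recorded in Subsection \ref{subsection_Boolean}. Since that lattice is a Boolean algebra, so is the lattice of graded ideals.

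For (2) $\Rightarrow$ (1), assuming the lattice of graded ideals is a Boolean algebra, I would show that each graded ideal $I$ is an annihilator ideal by identifying $\ann(I)$ with the Boolean complement of $I$. Let $J$ denote the complement of $I$, so that $I\wedge J=I\cap J=\{0\}$ and $I\vee J=I+J=L_K(E)$. From $I\cap J=\{0\}$ and the fact that $I,J$ are two-sided ideals one gets $IJ=JI=\{0\}$, hence $J\subseteq\ann(I)$. For the reverse inclusion I would invoke semiprimeness of $L_K(E)$, which gives $I\cap\ann(I)=\{0\}$, together with the elementary Boolean-algebra identity that $a\wedge b=0$ forces $b\leq\neg a$ (this is where distributivity is genuinely used); applied to $a=I$ and $b=\ann(I)$ this yields $\ann(I)\subseteq J$. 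Therefore $\ann(I)=J$ is precisely the complement of $I$. Applying the same conclusion to the graded ideal $\ann(I)$ shows that $\ann(\ann(I))$ is the complement of $\ann(I)$; since complementation is an involution, the complement of $\ann(I)$ is $I$, and uniqueness of complements gives $\ann(\ann(I))=I$. Hence every graded ideal is an annihilator ideal, every admissible pair is reflexive, and $E$ is all-reflexive.

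The main obstacle is the (2) $\Rightarrow$ (1) direction, and within it the verification that the Boolean complement of $I$ is exactly $\ann(I)$: the containment $J\subseteq\ann(I)$ is formal, but the reverse containment $\ann(I)\subseteq J$ genuinely needs the distributive (Boolean) structure rather than mere existence of a complement, and one must take care that the join in the lattice of graded ideals is the honest sum $I+J$ so that $I\vee J=L_K(E)$ can be read off directly. Once the complement is identified with the annihilator, the passage to $\ann(\ann(I))=I$ via involutivity and uniqueness of complements is routine.
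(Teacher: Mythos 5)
Your proof is correct and its overall skeleton matches the paper's, but the key step of the converse direction is carried out by a genuinely different mechanism, so a comparison is in order. The direction (1) $\Rightarrow$ (2) is essentially the paper's argument verbatim: under all-reflexivity the graded ideals and the annihilator ideals form the same set with the same meet and join, so Booleanness transfers from the annihilator lattice of Subsection \ref{subsection_Boolean}. In (2) $\Rightarrow$ (1) you and the paper both identify the Boolean complement $J$ of a graded ideal $I$ with $\ann(I)$, and the containment $J\subseteq\ann(I)$ is the same formal observation; the difference lies in proving $\ann(I)\subseteq J$. The paper argues with local units: for $r\in\ann(I)$ pick a local unit $u$ and write $u=x+y$ with $x\in I$, $y\in J$ (using $I+J=L_K(E)$), whence $r=ru=rx+ry=ry\in J$. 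You instead argue lattice-theoretically: $\ann(I)$ is itself graded by Proposition \ref{annihilators_are_graded}, hence an element of the lattice; semiprimeness gives $I\cap\ann(I)=\{0\}$; and distributivity then forces $\ann(I)\leq\neg I=J$. Both routes are valid, but note that your parenthetical claim that this containment ``genuinely needs'' the distributive structure is contradicted by the paper's proof: the local-unit computation uses only that $J$ is a complement of $I$ with $I+J=L_K(E)$, so the paper's argument in fact establishes the implication from mere complementedness of the lattice of graded ideals, a formally weaker hypothesis than Booleanness (this is also why the paper can recycle the identical argument for the non-graded case in Theorem \ref{theorem_Boolean}). What your version buys in exchange is a cleaner separation of concerns: once the two ring-theoretic inputs (semiprimeness and gradedness of annihilators) are granted, the remainder is pure Boolean-algebra manipulation with no element-level computation in $L_K(E)$.
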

\begin{proof}
If (1) holds, and $I$ and $J$ are graded ideals, then $I+J$ is an annihilator ideal by Theorem \ref{theorem_char_of_reflexive} and so $I\vee J=\ann(\ann(I+J))=I+J.$ So, the lattice of graded ideals coincides with the lattice of annihilator ideals. As the latter is a Boolean algebra, the former is a Boolean algebra. 

Assuming (2), we show that condition (1) of Theorem \ref{theorem_char_of_reflexive} holds. Let $I$ be any graded ideal and let $J$ be its complement in the lattice of graded ideals. As $I\cap J=\{0\},$ we have that $yx=xy=0$ for every $x\in I$ and $y\in J.$ This shows that $J\subseteq \ann(I).$ To show $\ann(I)\subseteq J,$ let $r\in \ann(I)$ and let $u$ be a local unit for $r.$ As $I+J=L_K(E),$ $u=x+y$ for some  $x\in I$ and $y\in J.$ So, $r=ru=r(x+y)=0+ry=ry\in J.$
This shows that $J=\ann(I).$ This also shows that $\ann(\ann(I))$ is the complement of $J$. Thus, $\ann(\ann(I))=I.$  
\end{proof}

We say that a graph is {\em strongly all-reflexive} if it is all-reflexive and it has no cycles without exits. If $E$ is finite, $E$ is strongly all-reflexive if and only if each cycle in $E$ is extreme. We relate the strong all-reflexivity of $E$ with properties of the lattice of all ideals of $L_K(E)$. 

\begin{theorem}
The following conditions are equivalent for any graph $E$ and any field $K.$  
\begin{enumerate}[\upshape(1)]
\item Each ideal of $L_K(E)$ is an annihilator ideal. 
\item The graph $E$ is strongly all-reflexive. 
\item The lattice of ideals of $L_K(E)$ is a Boolean algebra. 
\end{enumerate}
\label{theorem_Boolean} 
\end{theorem}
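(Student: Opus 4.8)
The plan is to establish (1) $\Leftrightarrow$ (3) by the Boolean-complement argument already used for Theorem \ref{theorem_Boolean_graded}, and to establish (1) $\Leftrightarrow$ (2) by reducing to the graded case via Theorem \ref{theorem_char_of_reflexive} together with the description of nongraded ideals in Theorem \ref{theorem_nongraded_ideals}.

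For (1) $\Leftrightarrow$ (3), recall from Subsection \ref{subsection_Boolean} that the annihilator ideals of $L_K(E)$ always form a Boolean algebra. Hence if (1) holds, the lattice of all ideals coincides with the lattice of annihilator ideals and so is Boolean, giving (3). Conversely, assuming (3), I would copy the complement argument from the proof of Theorem \ref{theorem_Boolean_graded} verbatim, now applied to an arbitrary ideal $I$: if $J$ is the lattice complement of $I$, then $I\cap J=\{0\}$ forces $J\subseteq \ann(I)$, and using a local unit $u=x+y$ with $x\in I$, $y\in J$ one obtains $\ann(I)\subseteq J$, so $J=\ann(I)$; uniqueness of complements in a Boolean algebra then yields $\ann(\ann(I))=I$, i.e. (1). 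Nothing in that argument used gradedness, so it transfers directly.

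For (1) $\Rightarrow$ (2), the key input is Proposition \ref{annihilators_are_graded}: since $\ann_l(I)$ and $\ann_r(I)$ are graded, so is $\ann(I)=\ann_l(I)\cap\ann_r(I)$, and thus every annihilator ideal is graded. If (1) holds, every ideal is therefore graded, which is equivalent to $E$ satisfying Condition (K). A cycle without exits would make its base vertex the source of a unique closed simple path, contradicting Condition (K), so $E$ has no cycles without exits. Since (1) also makes every graded ideal an annihilator ideal, Theorem \ref{theorem_char_of_reflexive} shows $E$ is all-reflexive; together with the absence of cycles without exits this is precisely strong all-reflexivity. For (2) $\Rightarrow$ (1), assuming $E$ is strongly all-reflexive it is in particular all-reflexive, so Theorem \ref{theorem_char_of_reflexive} already gives that every graded ideal is an annihilator ideal; it remains to show that there are no nongraded ideals. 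I would argue by contradiction using Theorem \ref{theorem_nongraded_ideals}: a nongraded ideal $I$ yields a cycle $c$ whose image in the quotient $E/(H,S)$ (with $H=I\cap E^0$ and $S=\{v\in B_H\mid v^H\in I\}$) is a cycle without exits. Since $E$ has no cycles without exits, $c$ has an exit in $E$, so by condition (2a) of Theorem \ref{theorem_char_of_reflexive} the cycle $c$ is extreme. The image of $c$ being a cycle forces $c^0\cap H=\emptyset$, while its having no exits forces every exit of $c$ in $E$ to have range in $H$. Choosing one such exit $e$, extremality produces a path $q$ with $\so(q)=\ra(e)$ and $\ra(q)\in c^0$, and since $H$ is hereditary and $\ra(e)\in H$ we get $\ra(q)\in c^0\cap H$, contradicting $c^0\cap H=\emptyset$. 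Hence no nongraded ideal exists and every ideal is graded, hence an annihilator ideal.

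The main obstacle is this last step (2) $\Rightarrow$ (1): it is the only place where the extra hypothesis (no cycles without exits) and the combinatorics of nongraded ideals interact, and the delicate point is translating ``the image of $c$ in $E/(H,S)$ is a cycle without exits'' into the two facts $c^0\cap H=\emptyset$ and ``every exit of $c$ ranges into $H$,'' which are exactly what clash with the extremality of $c$.
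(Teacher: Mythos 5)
Your proof is correct, and it uses the same key ingredients as the paper: Theorem \ref{theorem_char_of_reflexive}, Theorem \ref{theorem_nongraded_ideals}, the Boolean algebra of annihilator ideals from Subsection \ref{subsection_Boolean}, and the local-unit complement argument of Theorem \ref{theorem_Boolean_graded}. The organization and one substantive step differ, however. The paper proves the cycle (1) $\Rightarrow$ (2) $\Rightarrow$ (3) $\Rightarrow$ (1): for (1) $\Rightarrow$ (2) it rules out a cycle $c$ without exits by observing that the ideal generated by $\so(c)+c$ is nongraded by Theorem \ref{theorem_nongraded_ideals}, while its double annihilator is graded; for (2) $\Rightarrow$ (3) it asserts that strong all-reflexivity gives Condition (K), hence that every ideal is graded, and then quotes Theorems \ref{theorem_char_of_reflexive} and \ref{theorem_Boolean_graded}; and (3) $\Rightarrow$ (1) is your same complement argument. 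Your (1) $\Rightarrow$ (2) is essentially equivalent to the paper's (gradedness of annihilators plus the equivalence of Condition (K) with all ideals being graded). The genuine difference is your (2) $\Rightarrow$ (1): where the paper delegates ``strongly all-reflexive implies every ideal is graded'' to the Condition (K) citation \cite[Proposition 2.9.9]{LPA_book}, you prove it directly: a nongraded ideal would, by Theorem \ref{theorem_nongraded_ideals}, yield a cycle $c$ with $c^0\cap H=\emptyset$ all of whose exits in $E$ range into $H$; strong all-reflexivity makes $c$ extreme, and extremality together with heredity of $H$ forces $\ra(q)\in c^0\cap H$ for a return path $q$, a contradiction. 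This buys self-containedness at the cost of a little length, and your derivation of the two facts about $c$ is sound (one harmless omission: ``no exits in $E/(H,S)$'' also excludes exits of the form $e'$ arising from edges of $c$ into $B_H-S$, but your contradiction does not need this). Your (1) $\Rightarrow$ (3) is also slightly more direct than the paper's route through (2); it implicitly uses that, under (1), the join $I+J$ of two ideals is itself an annihilator ideal, so the two lattice structures literally coincide, which is immediate.
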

\begin{proof}
If (1) holds, then each graded ideal is an annihilator ideal, so $E$ is all-reflexive by  Theorem \ref{theorem_char_of_reflexive}. Thus, to show (2), it remains to show that there are no cycles without exits. Assume that $c$ is a cycle without exits and let $u=\so(c).$ The ideal $I$ generated by $u+c$ is not graded by Theorem \ref{theorem_nongraded_ideals}. As $\ann(\ann(I))$ is a graded ideal, it cannot be equal to $I.$ This is a contradiction with (1).  

If (2) holds, then $E$ satisfies Condition (K), so each ideal of $L_K(E)$ is graded. Theorems \ref{theorem_char_of_reflexive} and \ref{theorem_Boolean_graded} imply that (3) holds. 

If (3) holds, the proof that (1) holds is the same as the proof of (2) $\Rightarrow$ (1) in Theorem \ref{theorem_Boolean_graded} (note that we did not use that the ideal is graded in the proof of that implication).
\end{proof}
 
\subsection{Quotient and porcupine graphs} 
Next, we show Proposition \ref{proposition_quotients} displaying some favorable features of being (strongly) all-reflexive which imply Corollaries \ref{corollary_quotients} and \ref{corollary_quotients_c_star}. 

\begin{proposition}
If $(H,S)$ is a reflexive admissible pair of $E,$ then $E$ is all-reflexive if and only if $E/(H,S)$ and $P_{(H,S)}$ are all-reflexive.  

If ``all-reflexive'' is replaced by ``strongly all-reflexive'', the statement continues to hold. 
\label{proposition_quotients} 
\end{proposition}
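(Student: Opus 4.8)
The plan is to reduce everything to the graph conditions (2a)--(2c) of Theorem~\ref{theorem_char_of_reflexive}: a graph is all-reflexive exactly when (2a)--(2c) hold for it, and strongly all-reflexive when in addition it has no cycle without exits. So the proposition amounts to checking that each of (2a), (2b), (2c), and the no-exit-free-cycle condition passes, in both directions, between $E$ and the pair $\{E/(H,S), P_{(H,S)}\}$.

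First I would cash in the reflexivity of $(H,S)$. By Proposition~\ref{proposition_on_reflexive} it gives $S = B_H$ and $R(H)-H \subseteq R(H^\bot)$. The equality $S=B_H$ makes $B_H - S$ empty, so no primed vertices or edges are created and $E/(H,S)$ is just the subgraph of $E$ induced on $E^0-H$ (keeping exactly the edges whose range avoids $H$). In $P_{(H,S)}$ the edges among $H$ are the $E$-edges among $H$, each breaking vertex emits only into $H$, and each new vertex $w^p$ emits the single edge $f^p$ pointing toward $H\cup S$; hence no $w^p$ and no breaking vertex lies on a cycle of $P_{(H,S)}$, so the cycles of $P_{(H,S)}$ are precisely the cycles of $E$ inside $H$. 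I would also record that $B_H$ is forced to be empty in either direction: when $E$ is all-reflexive this is Lemma~\ref{lemma_on_RH_bez_H}, and when $P_{(H,S)}$ is all-reflexive it holds because a breaking vertex would be an infinite emitter of $P_{(H,S)}$ lying on no cycle of $P_{(H,S)}$, violating (2b). Thus throughout we may take $S=\emptyset$.

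The engine of the argument is the clean split provided by $H$ being hereditary together with $R(H)-H \subseteq R(H^\bot)$. Because $H$ is hereditary, every cycle of $E$ lies wholly in $H$ or wholly in $E^0-H$, and every infinite path of $E$ either stays in $E^0-H$ forever or enters $H$ and stays there; the objects inside $H$ are seen by $P_{(H,S)}$ and those inside $E^0-H$ by the quotient. I would then transfer the conditions one at a time. For (2a) and the no-exit-free-cycle condition, a cycle inside $H$ keeps exactly the same exits and the same reachable paths in $E$ and in $P_{(H,S)}$ (reachability from $H$ never leaves $H$), so its status is preserved; for a cycle inside $E^0-H$, the inclusion $R(H)-H\subseteq R(H^\bot)$, combined with all-reflexivity of the quotient in one direction and of $E$ in the other, prevents it from having a non-returning exit into $H$, so its exits and returning paths again match those of the quotient. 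For (2b) the same inclusion rules out a vertex of $E^0-H$ that emits all of its edges into $H$ (it would be unable to reach $H^\bot$), so an infinite emitter off $H$ is still an infinite emitter of the quotient and is carried to a cycle there, while an infinite emitter in $H$ is handled by $P_{(H,S)}$. Extremeness supplies a fact I would use repeatedly: on an extreme cycle every emitted edge eventually returns to the cycle, so such vertices never contribute a bifurcation with range outside the relevant root.

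The step I expect to be the main obstacle is condition (2c) for infinite paths, and within it the reverse direction. The delicate point is an infinite path $\alpha$ living in $E^0-H$ that has infinitely many bifurcations into $H$: each such bifurcation has range in $H$, hence outside $R(\alpha^0)\subseteq E^0-H$, so $\alpha$ witnesses a failure of (2c) for $E$, yet these bifurcations are invisible to the quotient and, because the porcupine tails are finite, do not by themselves create a bad infinite path in $P_{(H,S)}$. Here I would use reflexivity to force the contradiction: infinitely many bifurcations into $H$ put all of $\alpha^0$ in $R(H)-H\subseteq R(H^\bot)$, so every vertex of $\alpha$ reaches $H^\bot$, which is disjoint from $R(\alpha^0)$. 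Running a construction like the one in the proof of Theorem~\ref{theorem_char_of_reflexive}, I would thread these escape routes into a single infinite path of the quotient that meets infinitely many edges leaving $R(\alpha^0)$ as bifurcations, contradicting the quotient's condition (2c). The symmetric bookkeeping for an infinite path that enters $H$ (comparing its bifurcations and root with those of the corresponding infinite path of $P_{(H,S)}$, and bounding the finite $E^0-H$-prefix by the extreme-cycle fact above) is routine once this escape-routing construction is in hand. Assembling the per-condition transfers then yields both directions of the equivalence and its strongly all-reflexive refinement.
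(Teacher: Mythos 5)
Your proposal is correct and follows essentially the same route as the paper's proof: the same reduction to conditions (2a)--(2c) of Theorem~\ref{theorem_char_of_reflexive}, the same use of Proposition~\ref{proposition_on_reflexive} (giving $S=B_H$ and $R(H)-H\subseteq R(H^\bot)$), the same hereditary splitting of cycles, emitters, and infinite paths between $P_{(H,S)}$ and $E/(H,S)$, and, for the genuinely hard case of an infinite path in $E^0-H$ with infinitely many bifurcations into $H$, the same threading of escape routes toward $H^\bot$ into an infinite path of the quotient that violates its condition (2c). If anything, your handling of breaking vertices in the reverse direction (deriving $B_H=\emptyset$ directly from condition (2b) for $P_{(H,S)}$, since no vertex of $S$ lies on a cycle of the porcupine graph) is slightly cleaner than the paper's corresponding case.
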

\begin{proof}
If $E$ is all-reflexive and $(H,S)$ is an admissible pair, then $S=B_H$ by Proposition \ref{proposition_on_reflexive}, so $E/(H,S)$ contains only the vertices and edges of $E.$ If $c$ is a cycle of $E/(H,S),$ then $c$ is a cycle of $E$ also and  $c^0\subseteq E^0-H.$ As $E$ is all-reflexive, $c$ is either without exits or extreme and, in either case, $T(c^0)$ is contained in $E^0-H.$ Hence, $c$ is either without exits or extreme in $E/(H,S).$ 

If $v$ is an infinite emitter of $E/(H,S),$ then $v\in E^0-H$ and, as $E$ is all-reflexive, $v$ is on a cycle which is necessarily extreme ($v$ emits more than one edge). This implies that $T(v)$ is in $E^0-H,$ so $v$ is on a cycle in $E/(H,S).$ 

If $\alpha$ is an infinite path of $E/(H,S),$ then $\alpha^0\subseteq E^0-H.$ As the set of bifurcations of $\alpha$ in $E/(H,S)$ is contained in the set of bifurcations of $\alpha$ in $E,$ we have that condition (2c) of Theorem \ref{theorem_char_of_reflexive} holds for $\alpha$ in $E/(H,S)$ because it holds for $\alpha$ in $E.$ This shows that $E/(H,S)$ is all-reflexive.

The porcupine graph $P_{(H,S)}$ has no new cycles, infinite emitters, nor infinite paths. If $E$ is all-reflexive, every cycle of $P_{(H,S)}$ is extreme or without exits because it is such in $E$ and every infinite emitter of $P_{(H,S)}$ is on an extreme cycle (in fact, $S=\emptyset$ by Lemma \ref{lemma_on_RH_bez_H}, so the edges of $P_{(H,S)}$ which are also in $E$
are only the edges of $E$ originating at vertices of $H$). If $\alpha$ is an infinite path of $P_{(H,S)}$, then $T(\alpha^0)\subseteq H$ which implies that condition (2c) of Theorem \ref{theorem_char_of_reflexive} holds for $\alpha$ in $P_{(H,S)}$ because it holds for $\alpha$ in $E.$ This shows that $P_{(H,S)}$ is all-reflexive.

Next, let us assume that $(H,S)$ is reflexive and that $P_{(H,S)}$ and $E/(H,S)$ are all-reflexive and let us show that $E$ is all-reflexive. If $c$ is a cycle of $E,$ then either $v\in H$ for some $v\in c^0$ or $c^0\subseteq E^0-H.$ In the first case, every vertex of $c$ is in $H$ and, as $H$ is hereditary, $T(c^0)\subseteq H.$ Thus, $c$ is either without exits or extreme in $P_{(H,S)}$ and so $c$ is such also in $E.$ In the second case, $c$ is a cycle in $E/(H,S).$ We claim that if $c$ has exits, then their ranges are in $E^0-H.$ Assume, on the contrary, that the range of an exit from $c$ is in $H.$ Then $c^0\subseteq R(H)-H,$ so $c^0\subseteq R(H^\bot)$ by Proposition \ref{proposition_on_reflexive}. As $c^0\subseteq R(H)$ implies that no vertex of $c$ is in $H^\bot,$ there has to be an exit from $c$ towards $H^\bot$ and the range of such exit is necessarily in $E^0-H$ since $H$ is hereditary. Thus, such exit is on a closed path returning to $c^0.$ As $H^\bot$ is hereditary, this implies that $c^0\subseteq H^\bot$ which is a contradiction with $c^0\subseteq R(H).$ This shows that the exits of $c$ have ranges necessarily in $E^0-H.$ As $E/(H,S)$ is all-reflexive, $c$ is without exits or extreme in $E$ because it is such in $E/(H,S).$ 

Let $v$ be an infinite emitter of $E.$ If $v$ does not emit edges to $E^0-H,$ then $v\in R(H)$ and $v\notin R(H^\bot).$ As $(H,S)$ is reflexive, this forces $v$ to be in $H,$ so $v$ is an infinite emitter in $P_{(H,S)}.$ Thus, $v$ is on a cycle in $P_{(H,S)}$ and, hence, also on a cycle in $E.$ If $v$ emits nonzero and finitely many edges to $E^0-H,$ then $v\in B_H.$ As $(H,S)$ is reflexive, $S=B_H$ and so $v$ is an infinite emitter of $P_{(H,S)}.$ Thus, $v$ is on a cycle in $P_{(H,S)}$ and, hence, also on a cycle in $E.$ If $v$ emits infinitely many edges to $E^0-H,$ then $v$ is an infinite emitter in $E/(H,S).$ So, it is on a cycle in $E/(H,S)$ and, hence, also on a cycle in $E.$

If $\alpha$ is an infinite path of $E,$ then either $v\in H$ for some $v\in\alpha^0$ or $\alpha^0\subseteq E^0-H.$ In the first case, let $v$ be the first vertex of $\alpha$ which is in $H$ and let $\alpha=pv\beta$ for some finite path $p$ and an infinite path $\beta.$ As $P_{(H,S)}$ is all-reflexive, only finitely many ranges of the bifurcations from $\beta$ can be in $E^0-R(\alpha^0).$ In addition, no vertex of $p$ is an infinite emitter, because such a vertex would be on an extreme cycle of $E^0-H$ and we would have that $\beta^0\subseteq E^0-H.$ However, $\beta^0\subseteq H.$ This shows that only finitely many bifurcations from $\alpha$ can have ranges outside of $R(\alpha^0).$

In the second case, $\alpha$ is an infinite path of $E/(H,S).$ If $\alpha$ has infinitely many bifurcations, only finitely many of them which have ranges in $E^0-H$ can be outside of $R(\alpha^0).$ 
Thus, $\alpha$ could possibly have infinitely many bifurcations with ranges not in $R(\alpha^0)$ only if $\alpha$ has infinitely many bifurcations to $H.$ We claim that this cannot happen. Assume, on the contrary, that it does happen. If a vertex of $\alpha$ is on a closed path, then that closed path has an exit towards $H.$ Since the cycles of $E$ which have exits are extreme and $H$ is hereditary, that would imply that all subsequent vertices of $\alpha$ are in $H.$ As $\alpha^0\subseteq E^0-H,$ this shows that no vertex of $\alpha$ is on a closed path. Hence, $\alpha$ is strictly decreasing and for every $v\in \alpha^0$ there is $w\in \alpha^0$ such that $v\gneq w$ (recall that this means $R(v)\subsetneq R(w)$) and $w$ emits an edge to $H.$ As every such $w$ is in $R(H)-H\subseteq R(H^\bot)$ and $w\notin E^0-R(H)=H^\bot,$ such $w$ emits a path which ends at $H^\bot$ and, hence, such path departs $\alpha$ at some point. Thus, the existence of infinitely many bifurcations to $H$ implies the existence of infinitely many bifurcations also towards $H^\bot.$  

Let $v_0\gneq v_1\gneq \ldots$ be the vertices of $\alpha$ such that $v_n$ emits an edge to $H$ and a path $p_n$ to $H^\bot$ for every $n.$ As $p_n$ departs $\alpha$ eventually and $E/(H, S)$ is all-reflexive, there is a vertex on $p_n$ which is not in $\alpha^0$ and which is in $R(\alpha^0)$ for infinitely many $n.$ Considering only those $n$ and $v_n$ for which this is the case, we can assume that this happens for every $n.$

Let $w_n\in R(\alpha^0)-\alpha^0$ be the vertex on $p_n$ such that all subsequent vertices of $p_n$ are not in $R(\alpha^0).$ As $\ra(p_n)\in H^\bot$ and $w_n\in R(H), w_n\notin H^\bot$ so $\ra(p_n)\neq w_n.$ Thus, let $e_n$ be an edge of $p_n$ with the source $w_n.$ Since $w_n\in R(\alpha^0),$ $w_n$ emits a path $q_n$ with the range in $\alpha^0$ and, as $\ra(e_n)\notin R(\alpha^0),$ the first edge of $q_n$ is not on $p_n.$ Let $n_0=0$ and let $\{v_{n_m}\mid m=0,1,\ldots\}$ be a subsequence of $\{v_n\mid n=0,1,\ldots\}$ such that $v_{n_{m+1}}$ is strictly after $\ra(q_{n_m})$ on $\alpha$ for all $m=0,1,\ldots.$ Let $r_{n_m}$ denote the part of $p_{n_m}$ from $\so(p_{n_m})=v_{n_m}$ to $w_{n_m}$ and let $s_{n_m}$ denote the part of $\alpha$ from $\ra(q_{n_m})$ to $v_{n_{m+1}}.$ Consider the infinite path $\beta=r_0q_0s_0r_{n_1}q_{n_1}s_{n_1}r_{n_2}q_{n_2}s_{n_2}\ldots.$ This is a strictly decreasing path of $E^0-H$ with infinitely many bifurcations $e_0,e_{n_1},e_{n_2}\ldots$ such that the range of $e_{n_m}$ is not in $R(\alpha^0)$ so it is not in $R(\beta^0)$ (note that $R(\beta^0)\subseteq R(\alpha^0)$). This contradicts the assumption that $E/(H,S)$ is all-reflexive and finishes the proof of the claim that $E$ is all-reflexive. 

If $E$ is strongly all-reflexive, then $E/(H,S)$ is such also because it cannot happen that an extreme cycle of $E$ becomes without exits in $E/(H,S).$ In addition, $P_{(H,S)}$ is also strongly all-reflexive since all cycles of $P_{(H,S)}$ are cycles of $E.$
Conversely, if $P_{(H,S)}$ and $E/(H,S)$ are strongly all-reflexive, and $c$ is a cycle of $E$, 
then $c$ is a cycle either in $P_{(H,S)}$ or in $E/(H,S).$ Since these graphs are strongly all-reflexive, $c$ has exits. So, $E$ is strongly all-reflexive.  
\end{proof}

If an admissible pair $(H,S)$ is not reflexive, it is possible to have that $E/(H,S)$ and $P_{(H,S)}$ are strongly all-reflexive and $E$ is not all-reflexive as the following example shows. 

\begin{example}
Let $E$ be the first graph below and let $H=\{v\}.$ As $H^\bot=\emptyset$ and $R(H)-H=\{u\},$ $H$ is not reflexive, so $E$ is not all-reflexive. The second graph is the quotient graph $E/H$ and the third graph is the porcupine graph $P_H.$ The use of the dotted lines in $P_H$ indicates that the pattern that every vertex emits one and receives two edges continues after the first row. Both $E/H$ and $P_H$ are strongly all-reflexive.  
{\small
\[\xymatrix{\\\bullet^v\\\bullet^u\ar@(lu,ld)\ar@(ru, rd)  \ar[u] }\hskip2.5cm
\xymatrix{\\\\\bullet\ar@(lu,ld)\ar@(ru, rd)}\hskip2.5cm 
\xymatrix{
&&&\bullet&&&\\
&&&\bullet\ar[u]&&&\\
&\bullet\ar[urr]&&&&\bullet\ar[ull]&\\
\bullet\ar[ur]\ar@{.}[d]&&\bullet\ar[ul]\ar@{.}[d]&&\bullet\ar[ur]\ar@{.}[d]&&\bullet\ar[ul]\ar@{.}[d]\\
&&&&&&\\
}
\]}

As another example, let $E$ be the first graph below and let $H$ be the union of its sinks. As Example \ref{example_not_reflexive} shows, $H$ is not reflexive, so $E$ is not all-reflexive. 
The second graph is the quotient graph $E/H$ and the third is the porcupine graph $P_H.$ Both $E/H$ and $P_H$ are strongly all-reflexive. 
{\small\[\xymatrix{  \bullet     & \bullet     & \bullet  &   & \\   \bullet \ar[r]\ar[u] & \bullet \ar[r]  \ar[u] & \bullet \ar[r]\ar[u] & \bullet \ar@{.>}[r] \ar@{.>}[u] &}\hskip1.4cm  \xymatrix{ \\ \bullet  \ar[r] & \bullet  \ar[r] & \bullet  \ar[r]& \bullet\ar@{.>}[r]&}\hskip1.4cm
\xymatrix{ \bullet        & \bullet       & \bullet \\ \bullet \ar[u] & \bullet\ar[u] & \bullet\ar[u] \\ & \bullet\ar[u] & \bullet \ar[u] \\  &  & \bullet \ar[u]}\;\;\;\ldots  \]}  
\label{example_porcupine_and_quotients} 
\end{example}
Proposition \ref{proposition_quotients} has an interesting corollary we present next. The proof follows directly from Proposition \ref{proposition_quotients} and Theorems \ref{theorem_char_of_reflexive} and \ref{theorem_Boolean}.  

\begin{corollary}
If $I$ is an annihilator ideal of $L_K(E),$ then  each graded ideal of $L_K(E)$ is an annihilator ideal if and only if each graded ideal of $I$ and each graded ideal of $L_K(E)/I$ are annihilator ideals. 

If ``graded ideal'' is replaced by ``ideal'' in the above statement, the statement continues to hold.     
\label{corollary_quotients}
\end{corollary}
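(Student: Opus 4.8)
The plan is to reduce both statements to the graph-theoretic characterizations already in hand and then feed them into Proposition \ref{proposition_quotients}. The first thing I would record is that the hypothesis automatically puts $I$ in a convenient form: since $I$ is an annihilator ideal, $I=\ann(\ann(I))$, and the annihilator $\ann=\ann_l\cap\ann_r$ of any ideal is graded by Proposition \ref{annihilators_are_graded} (being an intersection of two graded ideals). Hence $I$ is graded, so $I=I(H,S)$ for a (unique) admissible pair $(H,S)$, and by Proposition \ref{proposition_on_reflexive} the pair $(H,S)$ is reflexive. This reflexivity is exactly the standing hypothesis of Proposition \ref{proposition_quotients}, so that result is available.

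Next I would translate each of the three phrases ``each graded ideal is an annihilator ideal'' into a condition on a graph. For $L_K(E)$ itself, Theorem \ref{theorem_char_of_reflexive} says this is equivalent to $E$ being all-reflexive. For $I$, I would use the graded isomorphism $I\cong L_K(P_{(H,S)})$ of \cite[Theorem 3.3]{Lia_porcupine}: because an annihilator ideal is defined purely through the ring multiplication, a graded ring isomorphism carries graded annihilator ideals to graded annihilator ideals, so ``each graded ideal of $I$ is an annihilator ideal'' is equivalent to the same statement for $L_K(P_{(H,S)})$, i.e.\ to $P_{(H,S)}$ being all-reflexive by Theorem \ref{theorem_char_of_reflexive}. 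In the same way the graded isomorphism $L_K(E)/I\cong L_K(E/(H,S))$ (\cite[Theorem 5.7]{Tomforde}) makes ``each graded ideal of $L_K(E)/I$ is an annihilator ideal'' equivalent to $E/(H,S)$ being all-reflexive.

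To finish the first statement I would invoke Proposition \ref{proposition_quotients}, which applies since $(H,S)$ is reflexive: $E$ is all-reflexive if and only if both $E/(H,S)$ and $P_{(H,S)}$ are all-reflexive. Chaining this with the equivalences of the previous paragraph gives precisely the asserted biconditional. For the second statement I would run the identical argument verbatim with ``all-reflexive'' replaced throughout by ``strongly all-reflexive'', using Theorem \ref{theorem_Boolean} (which characterizes ``each ideal is an annihilator ideal'' as strong all-reflexivity of the underlying graph) in place of Theorem \ref{theorem_char_of_reflexive}, and using the ``strongly all-reflexive'' clause of Proposition \ref{proposition_quotients}.

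I expect no genuine obstacle, in keeping with the remark that the corollary follows directly from the cited results; the one point deserving a sentence of care is the transfer step, namely that the property ``every graded (resp.\ every) ideal is an annihilator ideal'' is preserved under the graded isomorphisms $I\cong L_K(P_{(H,S)})$ and $L_K(E)/I\cong L_K(E/(H,S))$. Since left, right, and double annihilators are determined solely by the multiplication of the ring, this preservation is immediate, so the whole argument is a clean concatenation of Proposition \ref{proposition_quotients} with Theorems \ref{theorem_char_of_reflexive} and \ref{theorem_Boolean}.
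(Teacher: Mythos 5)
Your proposal is correct and follows exactly the route the paper intends: the paper's proof is stated to ``follow directly from Proposition \ref{proposition_quotients} and Theorems \ref{theorem_char_of_reflexive} and \ref{theorem_Boolean},'' and your argument is precisely that concatenation, with the added (and welcome) explicit observations that an annihilator ideal is automatically graded with reflexive admissible pair, and that the annihilator-ideal property transfers across the graded isomorphisms $I\cong L_K(P_{(H,S)})$ and $L_K(E)/I\cong L_K(E/(H,S))$.
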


\section{Annihilator ideals of graph \texorpdfstring{$C^*$}{TEXT}-algebras}
\label{section_C_star}

\subsection{Graph \texorpdfstring{$\mathbf{C^*}$}{TEXT}-algebras}\label{subsection_intro_C_star}
If $E$ is a graph, the {\em graph $C^*$-algebra of $E$} is the universal $C^*$-algebra generated by mutually orthogonal projections $\{p_v\mid v\in E^0\}$ and partial isometries with mutually orthogonal ranges $\{s_e\mid e\in E^1\}$ satisfying the analogues of the (CK1) and (CK2) axioms and the axiom (CK3) stating that $s_es_e^*\leq p_{\so(e)}$ for every $e\in E^1$ (where $\leq$ is the order on the set of projections given by $p\leq q$ if $p=pq=qp$). The term ``universal'' in the definition means that the $C^*$-algebra version of the algebraic Universal Property  holds (see \cite[Definition 5.2.5]{LPA_book}). By letting $s_{e_1\ldots e_n}$ be $s_{e_1}\ldots s_{e_n}$ and $s_v=p_v$ for $e_1,\ldots ,e_n\in E^1$ and $v\in E^0,$ $s_p$ is defined for every path $p.$

The set $\{p_v, s_e\mid v\in E^0, e\in E^1\}$ is referred to as a {\em Cuntz-Krieger $E$-family}. For such an $E$-family and an element $z$ of the unit circle $\mathbb T$ in the complex plane, one defines a map $\gamma_z$ by $\gamma_z(p_v)=p_v$ and $\gamma_z(s_e)=zs_e$ and then uniquely extends this map to a $*$-automorphism of $C^*(E)$ (we assume a homomorphism of a $C^*$-algebra to be bounded). The {\em gauge action} $\gamma$ on $\mathbb T$ is given by $\gamma(z)=\gamma_z.$ 
A closed ideal $I$ of a graph $C^*$-algebra $C^*(E)$ is {\em gauge-invariant} if $\gamma_z(I)=I$ for every $z\in \mathbb T.$ By \cite[Theorem 3.6]{Bates_et_al}, each such ideal $I$ is the closure of the linear span of the elements $s_ps_q^*$ for paths $p,q$ with $\ra(p)=\ra(q)\in H$ and the elements $s_pp_v^Hs_q^*$ for paths $p,q$ with $\ra(p)=\ra(q)=v\in S$ where $p_v^H=p_v-\sum_{e\in \so^{-1}(v)\cap \ra^{-1}(E^0-H)}s_es_e^*$ for $v\in B_H$ and where $(H, S)$ is the admissible pair defined analogously as for a graded ideal of $L_K(E)$. An admissible pair $(H,S)$ uniquely determines a closed gauge-invariant ideal $I(H,S)$ and the lattice of closed gauge-invariant ideals of $C^*(E)$ is isomorphic to the lattice of admissible pairs and, hence, also to the lattice of graded ideals of $L_{\Cset}(E).$    

Let $E$ be a row-finite graph, let $I$ be a closed ideal of $C^*(E),$ and let $H=I\cap E^0.$ 
The ideal $J$ of $C^*(E/H)$ corresponding to $I/I(H)$ is contained in a closed ideal generated by the vertices of cycles without exits in $E/H$ (see \cite[Section 5.4]{LPA_book}). Let $C_H$ be the set of all cycles of $E$ such that they become without exits in $E/H$ and such that they have a nontrivial intersection with $J$ in $C^*(E/H).$ For every $c\in C_H,$ there is a finite-dimensional or separable infinite-dimensional Hilbert space $\mathcal H$ \footnote{The space $\mathcal H$ is finite-dimensional if the number of paths which end at a vertex of $c$ and which do not contain $c$ is finite. If this set of paths is infinite, $\mathcal H$ is separable and infinite-dimensional.} 
and a compact set $K_c\subseteq \mathbb T$ such that
the intersection of $J$ and the closed ideal generated by $c^0$ is $*$-isomorphic to $\mathcal K\otimes C_0(\mathbb T-K_c)$ where $\mathcal K$ is the algebra of compact operators on $\mathcal H$ and $C_0(\mathbb T-K_c)$ is the algebra of continuous functions which disappear at infinity (for a locally compact set $X\subseteq \Cset,$ $C_0(X)$ is the algebra of complex-valued continuous functions $f$ on $X$ such that for every $\varepsilon>0,$ there is a compact set $K\subseteq X$ such that $|f|<\varepsilon$ outside of $K$).

\subsection{Annihilator ideals of graph \texorpdfstring{$C^*$}{TEXT}-algebras}
Since each closed ideal of a $C^*$-algebra is self-adjoint, the subscripts can be dropped from $\ann_l$ and $\ann_r$ and it is sufficient to consider only the operator $\ann.$ While each annihilator ideal of $L_K(E)$ is graded, a graph $C^*$-algebra can have closed annihilator ideals which are not gauge-invariant (see \cite[Remark 3.12]{Goncalves_Royer_regular_ideals} and note that Lemma \ref{lemma_on_CT} has some more specifics), so the graph $C^*$-algebra version of Proposition \ref{annihilators_are_graded} does not hold.
On the other hand, the annihilator of a gauge-invariant ideal is gauge-invariant (see \cite[Lemma 3.2]{Brown_et_al}). Thus, the proof of Proposition \ref{annihilator_of_graded_ideal} directly adjusts to  the proof of the following corollary.  
\begin{corollary}
The annihilator of a closed gauge-invariant ideal $I(H,S)$ of $C^*(E)$ is $I(H^\bot, S^\bot).$
\label{annihilator_of_gauge_inv_ideal}
\end{corollary}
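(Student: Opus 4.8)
The claim to be established is that, in the graph $C^*$-algebra $C^*(E)$, the annihilator of the closed gauge-invariant ideal $I(H,S)$ is the closed gauge-invariant ideal $I(H^\bot,S^\bot)$, where $H^\bot=E^0-R(H)$ and $S^\bot=B_{H^\bot}-S$. The statement already instructs us that the proof is a direct adaptation of the proof of Proposition \ref{annihilator_of_graded_ideal}, together with the fact (cited from \cite[Lemma 3.2]{Brown_et_al}) that the annihilator of a gauge-invariant ideal is again gauge-invariant. The plan is to exploit exactly this structural parallel rather than to redo any hard analysis.

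First I would record the reduction: by \cite[Lemma 3.2]{Brown_et_al}, $\ann(I(H,S))$ is a closed gauge-invariant ideal of $C^*(E)$. Since the lattice of closed gauge-invariant ideals is isomorphic to the lattice of admissible pairs via $(G,T)\mapsto I(G,T)$ (as recalled in Subsection \ref{subsection_intro_C_star}), the ideal $\ann(I(H,S))$ has the form $I(G,T)$ for a unique admissible pair $(G,T)$. Thus it suffices to identify $G=\ann(I(H,S))\cap E^0$ and $T=\{v\in B_G\mid p_v^H\in\ann(I(H,S))\}$, and to show $G=H^\bot$ and $T=S^\bot$. This is the $C^*$-analogue of the opening move in the proof of Proposition \ref{annihilator_of_graded_ideal}, where the whole computation was reduced to identifying the vertex part and the breaking-vertex part of the annihilator.

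Next I would transport the vector-space generator computation verbatim. By \cite[Theorem 3.6]{Bates_et_al}, $I(H,S)$ is the closed linear span of the elements $s_ps_q^*$ with $\ra(p)=\ra(q)\in H$ and $s_pp_v^Hs_q^*$ with $\ra(p)=\ra(q)=v\in S$. These are precisely the algebraic generators used in Proposition \ref{annihilator_of_graded_ideal}, with $p,q$ replaced by $s_p,s_q$ and $v^H$ by $p_v^H$; the relations (V), (E1), (E2), (CK1), (CK2) used there have exact $C^*$-counterparts among the Cuntz--Krieger relations, and Lemma \ref{lemma_on_B_H_bot} applies unchanged since it is purely graph-theoretic. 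Because annihilation against a \emph{closed} span is equivalent to annihilation against the dense span (multiplication is continuous), checking that $p_v$ for $v\in H^\bot$ and $p_v^{H^\bot}$ for $v\in S^\bot$ kill every generator, and conversely, goes through word for word: one reruns each of the four cases in that proof ($v\ne\so(p)$, $v=\so(p)$, the $w\in S$ cases, and the converse counting argument on breaking vertices) with the hat notation replaced by isometries.

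**The main obstacle.**
The only genuinely nontrivial point — the one place where the $C^*$-setting is not a literal transcription — is the passage between ``$\ann(I(H,S))$ is gauge-invariant'' and ``$\ann(I(H,S))$ is determined by its vertex and breaking-vertex parts''. Algebraically this was automatic because a graded ideal equals the span of its homogeneous pieces; in the $C^*$-setting one must know that a closed gauge-invariant ideal is recovered from its admissible pair, which is the content of the lattice isomorphism cited above. I expect the rest of the verification to be routine once continuity of multiplication is invoked to reduce closed-span annihilation to dense-span annihilation; the subtlety to watch is precisely that closures are taken correctly at each step, so that the annihilator computed on generators really does coincide with the closed ideal $I(H^\bot,S^\bot)$ rather than merely a dense subideal.
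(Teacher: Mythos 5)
Your proposal is correct and takes essentially the same approach as the paper: the paper's entire argument is to invoke \cite[Lemma 3.2]{Brown_et_al} for gauge-invariance of the annihilator and then observe that the proof of Proposition \ref{annihilator_of_graded_ideal} adjusts directly, which is precisely the transcription you carry out (with the lattice correspondence of \cite[Theorem 3.6]{Bates_et_al} identifying the admissible pair, and continuity of multiplication reducing annihilation of the closed ideal to annihilation of its dense span of generators). The only slip is notational: in identifying the breaking-vertex part of the annihilator you should write $p_v^G$ rather than $p_v^H$, but this does not affect the argument.
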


Proposition \ref{proposition_on_reflexive} holds for a closed gauge-invariant ideal of a graph $C^*$-algebra: conditions (2) and (3) are conditions on the graph only, so their equivalence is not impacted by whether we consider a Leavitt path algebra or a graph $C^*$-algebra and Corollary \ref{annihilator_of_gauge_inv_ideal} and Proposition \ref{closure_on_admissible} imply that the $C^*$-algebra version of (1) is equivalent to (2). 

Proposition \ref{proposition_condition_L} is a statement on $E$ only, not on $L_K(E)$ nor $C^*(E).$ Thus, Proposition \ref{proposition_condition_L} implies that \cite[Theorem 3.5, Proposition 3.7, and Corollary 3.8]{Brown_et_al} hold without requiring the graph to be row-finite. 

Next, we show the graph $C^*$-algebra analogue of Theorems \ref{theorem_char_of_reflexive} and \ref{theorem_Boolean_graded}.   

\begin{corollary}
The following conditions are equivalent for any graph $E.$  
\begin{enumerate}[\upshape(1)]
\item Each closed gauge-invariant ideal of $C^*(E)$ is an annihilator ideal. 

\item The graph $E$ is all-reflexive.  

\item The lattice of closed gauge-invariant ideals of $C^*(E)$ is a Boolean algebra. 
\end{enumerate}
\label{corollary_reflexive_c_star} 
\end{corollary}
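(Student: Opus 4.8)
The plan is to deduce all three equivalences from the machinery already developed for graded ideals of $L_K(E)$, transferred to $C^*(E)$ via the lattice isomorphism recorded in subsection \ref{subsection_intro_C_star}. Recall from that subsection that $(H,S)\mapsto I(H,S)$ is an isomorphism from the lattice of admissible pairs of $E$ onto the lattice of closed gauge-invariant ideals of $C^*(E)$, and that the same assignment gives an isomorphism onto the lattice of graded ideals of $L_{\Cset}(E).$ Moreover, by Corollary \ref{annihilator_of_gauge_inv_ideal} the annihilator of $I(H,S)$ in $C^*(E)$ is $I(H^\bot,S^\bot)$, exactly as in $L_K(E)$ by Proposition \ref{annihilator_of_graded_ideal}; thus the operation $\ann$ corresponds to the map $(H,S)\mapsto(H^\bot,S^\bot)$ on admissible pairs in both settings.

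For (1) $\Leftrightarrow$ (2), I would first invoke the $C^*$-algebra version of Proposition \ref{proposition_on_reflexive}, which holds because its conditions (2) and (3) depend only on $E$ while its condition (1) is tied to (2) through Corollary \ref{annihilator_of_gauge_inv_ideal} and Proposition \ref{closure_on_admissible}. This yields that a closed gauge-invariant ideal $I(H,S)$ is an annihilator ideal of $C^*(E)$ if and only if $(H,S)$ is reflexive. Since every closed gauge-invariant ideal is of the form $I(H,S)$, condition (1) holds exactly when every admissible pair of $E$ is reflexive, which by Theorem \ref{theorem_char_of_reflexive} is the definition of $E$ being all-reflexive. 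Hence (1) $\Leftrightarrow$ (2).

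For (2) $\Leftrightarrow$ (3), the key observation is that being a Boolean algebra is a property of the underlying lattice structure alone and is therefore preserved by any lattice isomorphism. The lattice of closed gauge-invariant ideals of $C^*(E)$ is isomorphic to the lattice of admissible pairs and hence to the lattice of graded ideals of $L_{\Cset}(E)$, so the former is a Boolean algebra if and only if the latter is. By Theorem \ref{theorem_Boolean_graded}, the lattice of graded ideals of $L_{\Cset}(E)$ is a Boolean algebra precisely when $E$ is all-reflexive, so (3) $\Leftrightarrow$ (2) follows at once.

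The argument is essentially a transfer of the Leavitt path algebra results through the lattice isomorphism, so no genuinely new difficulty arises. The one point requiring care is the verification of the $C^*$-algebra analogue of Proposition \ref{proposition_on_reflexive}: one must confirm that the equivalence of its condition (1) with condition (2) survives the passage from $L_K(E)$ to $C^*(E)$. This is exactly where Corollary \ref{annihilator_of_gauge_inv_ideal} is indispensable, since it guarantees that the annihilator of a closed gauge-invariant ideal is again of the form $I(H^\bot,S^\bot)$, so that $\ann(\ann(I(H,S)))=I(H^{\bot\bot},S^{\bot\bot})$ and reflexivity of $(H,S)$ is equivalent to $I(H,S)$ being an annihilator ideal of $C^*(E)$.
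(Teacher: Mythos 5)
Your proposal is correct and follows essentially the same route as the paper's own proof: both establish (1) $\Leftrightarrow$ (2) by noting that Corollary \ref{annihilator_of_gauge_inv_ideal} and Proposition \ref{closure_on_admissible} give the $C^*$-version of Proposition \ref{proposition_on_reflexive}, so that a closed gauge-invariant ideal $I(H,S)$ is an annihilator ideal exactly when $(H,S)$ is reflexive, and then invoke the equivalence (from Theorem \ref{theorem_char_of_reflexive}) of all pairs being reflexive with $E$ being all-reflexive. The Boolean algebra equivalence is also handled identically, by transferring Theorem \ref{theorem_Boolean_graded} through the lattice isomorphism between closed gauge-invariant ideals of $C^*(E)$ and graded ideals of $L_{\Cset}(E)$.
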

\begin{proof}
The proof of Theorem \ref{theorem_char_of_reflexive}
shows that condition (2) holds if and only if $(H,S)$ is reflexive for every admissible pair $(H,S).$ As 
each closed gauge-invariant ideal $I$ of $C^*(E)$ is of the form $I=I(H,S)$ for some admissible pair $(H,S),$ and $I$ is an annihilator ideal if and only if $(H,S)$ is reflexive, this shows the equivalence of (1) and (2). 

Since the lattice of graded ideals of $L_{\Cset}(E)$ and the lattice of closed gauge-invariant ideals of $C^*(E)$ are isomorphic, one lattice is a Boolean algebra if and only if the other one is a Boolean algebra. Thus, conditions (1) and (2) are equivalent to condition (3) by Theorem \ref{theorem_Boolean_graded}.
\end{proof}

If $E$ is $\xymatrix{\bullet^v\ar@(ur, dr)^e},$ then  $C^*(E)$ is $*$-isomorphic to the algebra $C(\mathbb T)$ of continuous $\Cset$-valued functions on $\mathbb T.$ By \cite[Theorem 3.4.1]{Kadison_Ringrose}, each closed ideal $I$ of $C(\mathbb T)$ is uniquely determined by a closed set $K\subseteq \mathbb T$ such that every element of $I$ vanishes on $K$ and we write $I=I(K)$ in this case. We use this notation in the following lemma, needed for Theorem \ref{theorem_Boolean_c_star}. 

\begin{lemma} Statements (1) to (3) hold for the algebra $C(\mathbb T)$ and they imply statement (4). 
\begin{enumerate}[\upshape(1)]
\item If $K$ is a closed subset of $\mathbb T,$ then 
$\ann(I(K))=I(\overline{\mathbb T-K}).$

\item If $K$ is a closed subset of $\mathbb T,$ then $I(K)$ is an annihilator ideal if and only if the interior of $K$ is nonempty or $K$ is empty. 

\item The algebra $C(\mathbb T)$ has a proper closed  ideal $I$ such that $\ann(I)$ is trivial.  

\item If $E$ is a row-finite graph with a single cycle $c$ and $c$ is without exits and such that every infinite path contains a vertex of $c,$ then $C^*(E)$ has a proper closed ideal $I$ such that $\ann(I)$ is trivial.   
\end{enumerate}
\label{lemma_on_CT}
\end{lemma}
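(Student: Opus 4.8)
The plan is to dispatch parts (1)--(3), which live entirely in the commutative algebra $C(\mathbb T)$, using the inclusion-reversing bijection $K\mapsto I(K)$ between closed subsets of $\mathbb T$ and closed ideals of $C(\mathbb T)$, and then to derive (4) by transporting the ideal built in (3) through the description of the ideal of $C^*(E)$ generated by the cycle $c$. For (1) I would argue that $g\in\ann(I(K))$ iff $gf=0$ for every $f\in I(K)$; a Urysohn bump function vanishing on $K$ but nonzero at a prescribed point of $\mathbb T-K$ forces $g$ to vanish on $\mathbb T-K$, hence on $\overline{\mathbb T-K}$, and conversely if $g$ vanishes there then at each point of $\mathbb T$ either $f$ or $g$ vanishes, so $gf=0$; thus $\ann(I(K))=I(\overline{\mathbb T-K})$. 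For (2) I would iterate (1): since $\mathbb T-\overline{\mathbb T-K}=\operatorname{int}K$, we get $\ann(\ann(I(K)))=I(\overline{\operatorname{int}K})$, and injectivity of $K\mapsto I(K)$ shows $I(K)$ is an annihilator ideal exactly when $K=\overline{\operatorname{int}K}$ (which forces $\operatorname{int}K\neq\emptyset$ unless $K=\emptyset$). For (3) I would simply take $K=\{1\}$: then $I(\{1\})$ is a proper (maximal) ideal and $\ann(I(\{1\}))=I(\overline{\mathbb T-\{1\}})=I(\mathbb T)=\{0\}$.

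For (4), write $A=C^*(E)$ and let $\mathcal J$ be the closed ideal generated by $c^0$. The first step is to invoke the structure theory recalled in Section \ref{subsection_intro_C_star}: because $c$ is a cycle without exits, $\mathcal J$ is $*$-isomorphic to $\mathcal K\otimes C(\mathbb T)$, where $\mathcal K=\mathcal K(\mathcal H)$ for the Hilbert space $\mathcal H$ associated to the paths ending on $c$ that do not contain $c$ (the case $K_c=\emptyset$). Inside $\mathcal J\cong\mathcal K\otimes C(\mathbb T)$ I would set $\mathcal I_0=\mathcal K\otimes I(\{1\})$. Since $\mathcal K$ is simple, every closed ideal of $\mathcal J$ has the form $\mathcal K\otimes I(K)$ and is automatically invariant under the multiplier action of $A$, so $\mathcal I_0$ is in fact a closed ideal of $A$. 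By part (3), $\ann_{\mathcal J}(\mathcal I_0)=\mathcal K\otimes\ann_{C(\mathbb T)}(I(\{1\}))=0$, i.e. $\mathcal I_0$ is essential in $\mathcal J$; since two closed ideals with zero intersection annihilate each other, this gives $\ann_A(\mathcal I_0)\cap\mathcal J=0$ and therefore $\ann_A(\mathcal I_0)\subseteq\ann_A(\mathcal J)$.

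To account for possible sinks, let $\Sigma$ be the set of sinks of $E$, let $\mathcal D$ be the closed ideal generated by $\Sigma$, and put $I=\mathcal I_0+\mathcal D$. Properness of $I$ I would check by compressing to the corner $p_{v_0}Ap_{v_0}\cong C(\mathbb T)$ at a vertex $v_0\in c^0$ (paths from $v_0$ stay on $c$, so this corner is generated by $p_{v_0}$ and the cycle unitary $s_c$): because $v_0$ cannot reach any sink, $p_{v_0}\mathcal D=0$, while $p_{v_0}\mathcal I_0p_{v_0}=I(\{1\})\subsetneq C(\mathbb T)$, so $I\neq A$. For the annihilator, $\ann_A(I)=\ann_A(\mathcal I_0)\cap\ann_A(\mathcal D)\subseteq\ann_A(\mathcal J)\cap\ann_A(\mathcal D)$; both $\mathcal J$ and $\mathcal D$ are gauge-invariant, so by Corollary \ref{annihilator_of_gauge_inv_ideal} (and the absence of breaking vertices in a row-finite graph) this intersection is a gauge-invariant ideal whose vertex set is contained in $E^0-(R(c^0)\cup R(\Sigma))$. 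Here the hypothesis enters decisively: the assumption that every infinite path meets $c$, together with row-finiteness and K\"onig's lemma, forces every vertex to reach either $c^0$ or a sink, so $R(c^0)\cup R(\Sigma)=E^0$, the vertex set is empty, and $\ann_A(I)=\{0\}$.

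The main obstacle is part (4), and within it two points require care. The first is the structural identification $\mathcal J\cong\mathcal K\otimes C(\mathbb T)$ and the observation that each closed ideal of $\mathcal J$ survives as an ideal of $A$ (so that $\mathcal I_0$ is genuinely two-sided in $A$); this is where the machinery of Section \ref{subsection_intro_C_star} is used. The second, and the place where the hypothesis ``every infinite path contains a vertex of $c$'' is essential, is the reachability statement $R(c^0)\cup R(\Sigma)=E^0$, which is exactly what collapses $\ann_A(\mathcal J)\cap\ann_A(\mathcal D)$ to zero; without it the constructed ideal could fail to be essential. Parts (1)--(3) are routine once the closed-set/closed-ideal dictionary for $C(\mathbb T)$ is set up.
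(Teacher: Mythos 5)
Your parts (1)--(3) follow the paper's own argument essentially verbatim: the same Urysohn-type function showing $\ann(I(K))=I(\overline{\mathbb T-K})$, the same identification of $\ann(\ann(I(K)))$ with the ideal of the closure of the interior of $K$, and the same choice $K=\{1\}$ for (3). Part (4), however, is where you genuinely diverge. The paper's proof of (4) is two lines: it invokes \cite[Proposition 5.4.2]{LPA_book} to identify the \emph{whole} algebra $C^*(E)$ with $\mathcal K\otimes C(\mathbb T)$ and then takes $\mathcal K\otimes I$ with $I$ from (3). You instead localize in the closed ideal $\mathcal J$ generated by $c^0$ (this is the part that is $*$-isomorphic to $\mathcal K\otimes C(\mathbb T)$), promote $\mathcal I_0=\mathcal K\otimes I(\{1\})$ to an ideal of $C^*(E)$, enlarge it by the ideal $\mathcal D$ generated by the sinks, verify properness in the corner $p_{v_0}C^*(E)p_{v_0}\cong C(\mathbb T)$, and kill the annihilator by the gauge-invariant calculus of Corollary \ref{annihilator_of_gauge_inv_ideal} together with the K\"onig's-lemma observation that $E^0=R(c^0)\cup R(\Sigma)$. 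Both routes are sound, but they do not cover literally the same ground: the identification $C^*(E)\cong\mathcal K\otimes C(\mathbb T)$ forces every vertex to connect to $c$, and the hypotheses of (4) guarantee this only when there are no sinks (a loop together with an isolated vertex satisfies all the hypotheses, yet its $C^*$-algebra is $C(\mathbb T)\oplus\Cset$, which is not of that form). This is harmless for the paper, because in the one place the lemma is applied --- the proof of Theorem \ref{theorem_Boolean_c_star} --- the graph $E/(H^\bot,B_{H^\bot})$ has every vertex connecting to $c^0$ and no sinks. Your proof, by handling $\mathcal D$ and proving the reachability statement, establishes (4) in the generality in which it is actually stated; that robustness is what the extra work buys, at the cost of a substantially longer argument.

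Two small remarks. First, in (2) what you (and, implicitly, the paper) actually prove is that $I(K)$ is an annihilator ideal if and only if $K=\overline{\operatorname{int}K}$; the condition ``the interior of $K$ is nonempty or $K$ is empty'' is a consequence of this but not equivalent to it (a closed arc together with an isolated point has nonempty interior without being the closure of its interior). You state the correct criterion, and only that sound direction is ever used, so nothing downstream is affected. Second, your justification that $\mathcal I_0$ is an ideal of $C^*(E)$ via simplicity of $\mathcal K$ and multiplier-invariance is more elaborate than needed: a closed two-sided ideal of a closed two-sided ideal of a $C^*$-algebra is automatically an ideal of the ambient algebra (approximate-unit argument), with no classification of the ideals of $\mathcal J$ required.
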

\begin{proof}
To show (1), note that for every closed set $K\subseteq \mathbb T$ and for every $x\in \mathbb T-K,$ there is a function in $C(\mathbb T)$ which is zero on $K$ and nonzero at $x$ (see the proof of \cite[Theorem 3.4.1]{Kadison_Ringrose}). This implies that $\ann(I(K))$ consists of the elements of $C(\mathbb T)$ which vanish on the closure of $\mathbb T-K.$  

As (2) clearly holds if $K=\emptyset,$ let us assume that $K\neq\emptyset.$ By part (1), the condition that $I(K)$ is an annihilator is equivalent with the requirement that $\overline{\mathbb T-\overline{\mathbb T-K}}=K.$ Since $\overline{\mathbb T-\overline{\mathbb T-K}}$ is the closure of the interior of $K$, it is equal to $K$ if and only if the interior of $K$ is nonempty. 

To show (3), let $K=\{1\}.$ Since $K\neq\emptyset,$  $I(K)$ is proper. As $\overline{\mathbb T-K}=\mathbb T,$ $\ann(I(K))=I(\mathbb T)=\{0\}.$ 

If the assumptions of (4) hold, then $C^*(E)$ is $*$-isomorphic to $\mathcal K\otimes C(\mathbb T)$ where $\mathcal K$ is as in section \ref{subsection_intro_C_star} (see \cite[Proposition 5.4.2]{LPA_book}). If $I$ is a proper closed ideal of $C(\mathbb T)$ with the trivial annihilator, then $\mathcal K\otimes I$ is a proper closed ideal of an isomorphic copy of $C^*(E)$ and its annihilator is trivial.  
\end{proof}

We show the $C^*$-algebra version of Theorem \ref{theorem_Boolean} next. 

\begin{theorem}
The following conditions are equivalent for any graph $E.$  
\begin{enumerate}[\upshape(1)]
\item The graph $E$ is strongly all-reflexive. 

\item The lattice of closed ideals of $C^*(E)$ is a Boolean algebra. 

\item Each closed ideal of $C^*(E)$ is an annihilator ideal.

\item Each closed ideal of $C^*(E)$ is gauge-invariant and an annihilator ideal. 

\end{enumerate}
\label{theorem_Boolean_c_star} 
\end{theorem}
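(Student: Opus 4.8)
The plan is to prove the cycle of implications (1) $\Rightarrow$ (4) $\Rightarrow$ (3) $\Rightarrow$ (2) $\Rightarrow$ (1), drawing on the Leavitt-path-algebra results already established and on Lemma \ref{lemma_on_CT} for the obstruction coming from cycles without exits. The key structural fact I would invoke is that $E$ is strongly all-reflexive exactly when $E$ is all-reflexive and has no cycles without exits; by Corollary \ref{corollary_reflexive_c_star} the all-reflexive part controls the gauge-invariant ideals, and the ``no cycles without exits'' part is precisely what forces every closed ideal to be gauge-invariant. Recall that a graph $C^*$-algebra $C^*(E)$ has the property that \emph{every} closed ideal is gauge-invariant if and only if $E$ satisfies Condition (K), and Condition (K) is in turn equivalent to the absence of cycles without exits once all cycles with exits are extreme (strong all-reflexivity implies Condition (K), just as in the proof of Theorem \ref{theorem_Boolean}).

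First I would show (1) $\Rightarrow$ (4). Assuming $E$ is strongly all-reflexive, $E$ satisfies Condition (K), so by the standard gauge-invariant uniqueness results every closed ideal of $C^*(E)$ is gauge-invariant; this is the $C^*$-analogue of the fact (used in Theorem \ref{theorem_Boolean}) that Condition (K) makes every ideal of $L_K(E)$ graded. Once every closed ideal is gauge-invariant, Corollary \ref{corollary_reflexive_c_star} (applied with the all-reflexivity half of the hypothesis) gives that each such ideal is an annihilator ideal, yielding (4). The implication (4) $\Rightarrow$ (3) is immediate since (4) is a strengthening of (3). For (3) $\Rightarrow$ (2), I would use that $L_{\Cset}(E)$ is semiprime so $I \cap \ann(I) = \{0\}$ for every closed ideal $I$, hence the lattice of closed annihilator ideals is a Boolean algebra (exactly as recorded at the start of subsection \ref{subsection_Boolean} for the algebraic case); if every closed ideal is an annihilator ideal, the full lattice coincides with this Boolean algebra, giving (2).

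The remaining and most delicate implication is (2) $\Rightarrow$ (1), which I would prove contrapositively: if $E$ is not strongly all-reflexive, then either $E$ is not all-reflexive or $E$ has a cycle without exits. In the first case, by Corollary \ref{corollary_reflexive_c_star} the lattice of closed gauge-invariant ideals is already not a Boolean algebra, and since this lattice sits inside the lattice of all closed ideals as a sublattice whose failure of complementation cannot be repaired, the full lattice is not Boolean either; more carefully, one exhibits a non-reflexive admissible pair $(H,S)$ and argues that $I(H,S)$ has no complement among closed ideals. In the second case, where $E$ has a cycle $c$ without exits, the main obstacle is to produce a closed ideal with trivial (or otherwise non-complementary) annihilator; this is exactly what Lemma \ref{lemma_on_CT}(4) is designed to supply. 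The difficulty is that Lemma \ref{lemma_on_CT}(4) is stated only for a very restricted graph (row-finite, a single cycle without exits, every infinite path meeting $c$), so I would need to reduce the general cycle-without-exits situation to that model. The reduction proceeds by passing to the quotient and the ideal cut out by a suitable reflexive admissible pair: using Proposition \ref{proposition_quotients} and the porcupine/quotient decomposition, one isolates within $C^*(E)$ a corner or subquotient $*$-isomorphic to $\mathcal K \otimes C(\mathbb T)$ supported on the cycle $c$, and then transplants the bad ideal from Lemma \ref{lemma_on_CT}(3) into $C^*(E)$. The closed ideal of $C^*(E)$ generated by this copy of $I(\{1\}) \subseteq C(\mathbb T)$ will then fail to be an annihilator ideal, and in particular will have no complement, so the lattice of closed ideals is not a Boolean algebra, contradicting (2).

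I expect the genuine work to lie entirely in this last reduction step: translating the local structure near a cycle without exits into an honest subquotient isomorphic to $\mathcal K \otimes C(\mathbb T)$ and verifying that an ideal which is bad in that subquotient remains bad (non-complemented, non-annihilator) in the ambient $C^*(E)$. The gauge-invariant half of the argument is routine given Corollary \ref{corollary_reflexive_c_star}, and the semiprimeness/Boolean bookkeeping is formal; it is the interaction between non-gauge-invariant closed ideals and the $C_0(\mathbb T - K_c)$ factors described in subsection \ref{subsection_intro_C_star} that requires care, which is presumably why Lemma \ref{lemma_on_CT} was isolated beforehand.
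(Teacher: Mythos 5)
Your high-level architecture is reasonable and largely parallels the paper's (Condition (K) plus Corollary \ref{corollary_reflexive_c_star} for the easy directions, Lemma \ref{lemma_on_CT}(4) for the cycle obstruction), but there is a genuine gap: the step you yourself label ``the genuine work'' is precisely where all the content of the paper's proof lies, and your sketch of it would not go through as stated. The paper does not take ``the closed ideal of $C^*(E)$ generated by a copy of $I(\{1\})$'' inside some subquotient. It sets $H$ equal to the hereditary and saturated closure of $c^0$ and passes to the quotient by $I(H^\bot, B_{H^\bot})$; it then must prove that the quotient graph $E/(H^\bot,B_{H^\bot})$ is row-finite (via Lemma \ref{lemma_on_RH_bez_H} and all-reflexivity), that $c$ is its unique cycle, and --- the longest part, a strictly-decreasing-path construction parallel to the one in Proposition \ref{proposition_quotients} --- that every infinite path of the quotient graph meets $c$; only then do the hypotheses of Lemma \ref{lemma_on_CT}(4) hold. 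The bad ideal is then pulled back as the \emph{preimage} of $J$ under the quotient map (an ideal of a quotient cannot be ``transplanted by generating''), and its annihilator is shown to be trivial by the two-sided squeeze $\ann(I)\subseteq I(H,B_H)\cap I(H^\bot,B_{H^\bot})=\{0\}$, using Corollary \ref{annihilator_of_gauge_inv_ideal}. If instead you really mean to place the copy of $\mathcal K\otimes C(\mathbb T)$ inside an \emph{ideal} of $C^*(E)$ (the porcupine side), you would need a graph $C^*$-algebra version of the porcupine theorem $I(H,S)\cong C^*(P_{(H,S)})$, which the paper has only for Leavitt path algebras; none of your cited tools supplies it. So the reduction is not a routine verification to be deferred: it is the theorem.

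There is a second, smaller gap in your Case A of (2) $\Rightarrow$ (1). The assertion that a non-Boolean sublattice (the closed gauge-invariant ideals) forces the ambient lattice of all closed ideals to be non-Boolean is false as a general principle: complements of gauge-invariant ideals could a priori exist among non-gauge-invariant closed ideals. Your ``more carefully'' fix is the right idea, but carrying it out requires showing that any complement $J$ of a closed ideal $I$ in the lattice of closed ideals satisfies $J=\ann(I)$ and $I=\ann(J)$, which is exactly the approximate-unit argument the paper uses to prove (2) $\Rightarrow$ (3); you never write this argument down, yet both your Case A and the inference ``fails to be an annihilator ideal, and in particular has no complement'' in your Case B depend on it. (Also, semiprimeness should be invoked for $C^*(E)$ itself rather than for $L_{\Cset}(E)$, though this is harmless since every $C^*$-algebra is semiprime.) In short: your skeleton of implications (1) $\Rightarrow$ (4) $\Rightarrow$ (3) $\Rightarrow$ (2) $\Rightarrow$ (1) could be made to work, but the two arguments that would make it work --- the approximate-unit complement argument and the row-finite single-cycle reduction --- are exactly the two proofs the paper supplies and your proposal omits.
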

\begin{proof}
If (1) holds, then $E$ satisfies Condition (K), so each closed ideal of $C^*(E)$ is gauge-invariant (see \cite[Corollary 3.8]{Bates_et_al}). Corollary \ref{corollary_reflexive_c_star} implies that (2) and (4) hold. As (4) trivially implies (3), it is sufficient to show that (2) $\Rightarrow$ (3) and (3) $\Rightarrow$ (1) to prove the theorem.  

Assume that (2) holds and let $J$ be the complement of a closed ideal $I.$ The condition $I\cap J=\{0\}$ implies that $J\subseteq \ann(I).$ To show the converse, let $r\in \ann(I)$ and let $\{p_\lambda\}_{ \lambda\in \Lambda}$ be an approximate unit (so that $\lim_{\lambda\in \Lambda} p_\lambda r=\lim_{\lambda\in \Lambda} rp_\lambda=r).$ As $C^*(E)$ is the closure of $\ann(I)+J,$ for each $\lambda\in\Lambda,$ there is a  net $\{x_{\lambda\mu}+y_{\lambda\mu}\}_{\mu\in M_\lambda}$ converging to $p_\lambda$ and such that $x_{\lambda\mu}\in \ann(I)$ and $y_{\lambda\mu}\in J$ for $\mu\in M_\lambda.$  Hence, we have that
\[r=\lim_{\lambda\in \Lambda} rp_\lambda=\lim_{\lambda\in \Lambda} r(\lim_{\mu\in M_\lambda} x_{\lambda\mu}+y_{\lambda\mu})=\lim_{\lambda\in \Lambda} \lim_{\mu\in M_\lambda} rx_{\lambda\mu}+ry_{\lambda\mu}=\lim_{\lambda\in \Lambda} \lim_{\mu\in M_\lambda}ry_{\lambda\mu}.\]
As $ry_{\lambda\mu}\in J$ and $J$ is closed, this shows that $r\in J.$ This also shows that the complement of $J=\ann(I)$ is $\ann(\ann(I)).$ Hence,  $I=\ann(\ann(I)).$ Thus, (3) holds.  

Assume that (3) holds. In this case, condition (1) of Corollary \ref{corollary_reflexive_c_star} also holds, so $E$ is all-reflexive. Thus, to show (1), it remains to show that $E$ has no cycles without exits. Assume, on the contrary, that a cycle $c$ is without exits and let $H$ be the hereditary and saturated closure of $c^0.$ Since  
$\left(E/(H^\bot, B_{H^\bot})\right)^0=E^0-H^\bot=R(H)$ and $E$ is all-reflexive, $E/(H^\bot, B_{H^\bot})$ is row-finite by Lemma \ref{lemma_on_RH_bez_H}. If $d$ is a cycle with vertices in $R(H),$ then $d=c$ also since $E$ is all-reflexive. Thus, $c$ is the only cycle in the graph $E/(H^\bot, B_{H^\bot}).$ 

We claim that the assumptions of part (4) of Lemma \ref{lemma_on_CT} hold for $E/(H^\bot, B_{H^\bot}).$ To show this, it is sufficient to show that every infinite path of $E/(H^\bot, B_{H^\bot})$ contains a vertex of $c.$ Assume that $\alpha$ is an infinite path in $E/(H^\bot, B_{H^\bot})$ and no vertex of $\alpha$ is in $c^0.$ As $c$ is the only cycle of $E/(H^\bot, B_{H^\bot}),$ $\alpha$ is strictly decreasing and, as $\alpha^0\subseteq R(c^0)-c^0,$ with infinitely many bifurcations. We construct an infinite path similarly as in the proof of Proposition \ref{proposition_quotients} to arrive to a contradiction. 
Let $v_0\gneq v_1\gneq \ldots$ be the vertices of $\alpha$ such that, for every $n$, $v_n$ emits a path $p_n$ whose vertices are in $R(c^0)-c^0$ except the last one, $\ra(p_n),$ which is in $c^0.$ As $p_n$ departs $\alpha$ eventually and $E$ is all-reflexive,  there is a vertex on $p_n$ which is not in $\alpha^0$ and which is in $R(\alpha^0)$ for infinitely many $n.$ Considering only those $n$ and $v_n$ for which this is the case, we can assume that this happens for every $n.$ Let $w_n\in R(\alpha^0)-\alpha^0$ be the vertex on $p_n$ such that all subsequent vertices of $p_n$ are not in $R(\alpha^0).$ As $R(\alpha^0)\cap c^0=\emptyset$ and $\ra(p_n)\in c^0,$ $\ra(p_n)\neq w_n.$ Thus, let $e_n$ be an edge of $p_n$ with the source $w_n.$ Since $w_n\in R(\alpha^0),$ $w_n$ emits a path $q_n$ with the range in $\alpha^0$ and, as $\ra(e_n)\notin R(\alpha^0),$ the first edge of $q_n$ is not on $p_n.$ Let $n_0=0$ and let $\{v_{n_m}\mid m=0,1,\ldots\}$ be a subsequence of $\{v_n\mid n=0,1,\ldots\}$ chosen so that $v_{n_{m+1}}$ is strictly after $\ra(q_{n_m})$ on $\alpha$ for all $m=0,1,\ldots.$ Let $r_{n_m}$ denote the part of $p_{n_m}$ from $\so(p_{n_m})=v_{n_m}$ to $w_{n_m}$ and let $s_{n_m}$ denote the part of $\alpha$ from $\ra(q_{n_m})$ to $v_{n_{m+1}}.$ Consider the infinite path $\beta=r_0q_0s_0r_{n_1}q_{n_1}s_{n_1}r_{n_2}q_{n_2}s_{n_2}\ldots.$ This is a strictly decreasing path of $E/(H^\bot, B_{H^\bot})$ with infinitely many bifurcations $e_0,e_{n_1},e_{n_2}\ldots$ with ranges not in $R(\alpha^0)$ and, hence, not in $R(\beta^0).$ This contradicts the assumption that $E$ is all-reflexive and proves the claim.  
 
By part (4) of Lemma \ref{lemma_on_CT}, there is a proper closed ideal $J$ of $C^*(E/(H^\bot, B_{H^\bot}))$ with the trivial annihilator. Let $I$ be the proper closed ideal of 
$C^*(E)$ such that $I/I(H^\bot, B_{H^\bot})$ corresponds to $J$ under a $*$-isomorphism of $C^*(E)/I(H^\bot, B_{H^\bot})$ onto $C^*(E/(H^\bot, B_{H^\bot})).$ We claim that $\ann(I)=0.$ If $x\in \ann(I),$ then $x\in \ann(I(H^\bot, B_{H^\bot}))=I(H^{\bot\bot}, B_{H^\bot}^\bot)=I(H, B_H-B_{H^\bot})=I(H, B_H).$ On the other hand, $x\in \ann(I)$ implies that $x+I(H^\bot, B_{H^\bot})$ is in the annihilator of $I/I(H^\bot, B_{H^\bot})$ in $C^*(E)/I(H^\bot, B_{H^\bot}).$ Since this annihilator is trivial, $x$ is in $I(H^\bot, B_{H^\bot}).$ Thus, we have that $x$ is in the intersection $I(H, B_H)\cap I(H^\bot, B_{H^\bot}).$ As this intersection is trivial by Corollary \ref{corollary_reflexive_c_star} (note that $B_H^\bot=B_{H^\bot}$), we have that $x=0.$ Hence, $\ann(I)=0$ which implies that $I\subsetneq \ann(\ann(I))$ since $I$ is proper and $\ann(\ann(I)) =C^*(E).$  This is a contradiction with (3). Hence, no cycle $c$ without exits can exist. This shows that  showing that (1) holds. 
\end{proof} 

As Lemma \ref{lemma_on_CT} shows, there can be closed annihilator ideals which are not gauge-invariant in a graph $C^*$-algebra. By Theorem \ref{theorem_Boolean_c_star}, this cannot happen if {\em all} of the closed ideals are annihilators. 

Lastly, we state the following corollary of Proposition \ref{proposition_quotients}, Corollary \ref{corollary_reflexive_c_star} and Theorem \ref{theorem_Boolean_c_star}.  

\begin{corollary}
If $I$ is a closed gauge-invariant ideal of $C^*(E)$ which is an annihilator ideal, then each closed gauge-invariant ideal of $C^*(E)$ is an annihilator ideal if and only if each closed gauge-invariant ideal of $I$ and each closed gauge-invariant ideal of $C^*(E)/I$ are annihilator ideals.  

The statement remains true if each occurrence of 
``gauge-invariant'' except the first one is deleted.     
\label{corollary_quotients_c_star}
\end{corollary}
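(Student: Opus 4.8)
The plan is to reduce the corollary directly to Proposition \ref{proposition_quotients} via the dictionary between closed gauge-invariant ideals of a graph $C^*$-algebra and graded ideals of its Leavitt path algebra. First I would recall that, by the discussion in section \ref{subsection_intro_C_star}, the lattice of closed gauge-invariant ideals of $C^*(E)$ is isomorphic to the lattice of admissible pairs of $E$, which in turn is isomorphic to the lattice of graded ideals of $L_{\Cset}(E)$. Under this correspondence, a closed gauge-invariant ideal $I$ is an annihilator ideal precisely when its admissible pair $(H,S)$ is reflexive (by Corollary \ref{annihilator_of_gauge_inv_ideal} together with the $C^*$-analogue of Proposition \ref{proposition_on_reflexive} discussed after Corollary \ref{annihilator_of_gauge_inv_ideal}). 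So the statement ``$I$ is an annihilator ideal'' translates to ``$(H,S)$ is reflexive'', exactly the hypothesis of Proposition \ref{proposition_quotients}.

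Next I would identify the three graphs that control the two sides of the equivalence. For the closed gauge-invariant ideal $I=I(H,S)$ of $C^*(E)$, the quotient $C^*(E)/I$ is $*$-isomorphic to $C^*(E/(H,S))$ and the ideal $I$ itself is $*$-isomorphic to $C^*(P_{(H,S)})$ (these are the $C^*$-analogues of the Leavitt path algebra isomorphisms $L_K(E)/I(H,S)\cong L_K(E/(H,S))$ and $I(H,S)\cong L_K(P_{(H,S)})$ recorded in section \ref{section_prerequisites}). Therefore ``each closed gauge-invariant ideal of $I$, resp. of $C^*(E)/I$, is an annihilator ideal'' is equivalent, by Corollary \ref{corollary_reflexive_c_star} applied to $P_{(H,S)}$, resp. to $E/(H,S)$, to the statement that $P_{(H,S)}$, resp. $E/(H,S)$, is all-reflexive. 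Likewise, ``each closed gauge-invariant ideal of $C^*(E)$ is an annihilator ideal'' is equivalent to $E$ being all-reflexive, again by Corollary \ref{corollary_reflexive_c_star}.

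With these translations in place, the first claim of the corollary is literally the assertion that $E$ is all-reflexive if and only if both $E/(H,S)$ and $P_{(H,S)}$ are all-reflexive, given that $(H,S)$ is reflexive — which is exactly Proposition \ref{proposition_quotients}. For the final sentence, where all but the first occurrence of ``gauge-invariant'' is dropped, I would replace Corollary \ref{corollary_reflexive_c_star} by Theorem \ref{theorem_Boolean_c_star} throughout: ``each closed ideal is an annihilator ideal'' is equivalent to \emph{strongly} all-reflexive, so the statement becomes that $E$ is strongly all-reflexive if and only if both $E/(H,S)$ and $P_{(H,S)}$ are strongly all-reflexive, which is the ``strongly all-reflexive'' half of Proposition \ref{proposition_quotients}. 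The hypothesis that $I$ is a gauge-invariant annihilator ideal guarantees $(H,S)$ is reflexive, so Proposition \ref{proposition_quotients} applies in both cases.

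The step I expect to require the most care is not the graph combinatorics, which is entirely absorbed into Proposition \ref{proposition_quotients}, but rather verifying that the $C^*$-algebra isomorphisms $C^*(E)/I\cong C^*(E/(H,S))$ and $I\cong C^*(P_{(H,S)})$ carry closed (gauge-invariant) annihilator ideals to closed (gauge-invariant) annihilator ideals and back — that is, that ``being an annihilator ideal'' is preserved under these $*$-isomorphisms and that taking annihilators commutes with passing to an ideal or a quotient. For the quotient this is routine since a $*$-isomorphism preserves the annihilator operation and the lattice of closed (gauge-invariant) ideals; for the ideal $I$, one must additionally check that annihilators computed inside $I$ agree with the relevant annihilators in $C^*(E)$, which is where the fact that $I$ is itself an annihilator ideal (so $I\cap \ann(I)=0$ and the two lattices glue correctly) is used.
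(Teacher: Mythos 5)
Your proposal is correct and is essentially the paper's own argument: the paper states this corollary as following directly from Proposition \ref{proposition_quotients}, Corollary \ref{corollary_reflexive_c_star} and Theorem \ref{theorem_Boolean_c_star}, which is exactly the reduction you carry out (translating annihilator conditions on $C^*(E)$, $I\cong C^*(P_{(H,S)})$ and $C^*(E)/I\cong C^*(E/(H,S))$ into (strong) all-reflexivity of the corresponding graphs, with reflexivity of $(H,S)$ supplied by the hypothesis on $I$). Your fleshing out of the translation steps, including the care needed with the porcupine and quotient identifications, only makes explicit what the paper leaves implicit.
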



\end{document}